\newtheorem{theorem}{Theorem}
\newtheorem{lemma}{Lemma}
\newtheorem{proposition}{Proposition}
\newtheorem{remark}{Remark}
\newtheorem{corollary}{Corollary} 
\title{Functional equation arising in behavioral sciences: solvability and collocation scheme in H\"older spaces}
\author{
Josefa Caballero\thanks{Departamento de Matem\'aticas, Universidad de Las Palmas de Gran Canaria, Campus de Tafira Baja, $35017$ Las Palmas de Gran Canaria, Spain.}, \and
Hanna Okrasi{\'n}ska-P{\l}ociniczak\thanks{Department of Mathematics, Wroclaw University of Environmental and Life Sciences, ul. C.K. Norwida 25, 50-275 Wroclaw, Poland}, \and 
{\L}ukasz P{\l}ociniczak\thanks{Faculty of Pure and Applied Mathematics, Wroclaw University of Science and Technology, Wyb. Wyspia\'nskiego 27, 50-370 Wroc{\l}aw, Poland, \underline{corresponding author:} \texttt{lukasz.plociniczak@pwr.edu.pl}}, \vspace{4pt}\and
Kishin  Sadarangani$^{\ast}$ 
}
\date{}
\begin{document}
\maketitle

\begin{abstract}	
	We consider a generalization of a functional equation that models the learning process in various animal species. The equation can be considered nonlocal, as it is built with a convex combination of the unknown function evaluated at mixed arguments. This makes the equation contain two terms with vanishing delays. We prove the existence and uniqueness of the solution in the H\"older space which is a natural function space to consider. In the second part of the paper, we devise an efficient numerical collocation method used to find an approximation to the main problem. We prove the convergence of the scheme and, in passing, several properties of the linear interpolation operator acting on the H\"older space. Numerical simulations verify that the order of convergence of the method (measured in the supremum norm) is equal to the order of H\"older continuity. \\
	
	\noindent\textbf{Keywords}: functional equation, nonlocal equation, H\"older continuity, collocation method, vanishing delay\\
	
	\noindent\textbf{AMS Classification}: 39B22, 65L60, 47H10, 54H25
\end{abstract}

\section{Introduction}
Since the middle of the twentieth century, psychologists have considered mathematics to be a useful tool in the learning process \cite{brunswik1939probability}. The main motivation for our work comes from the results of other authors that were published in \cite{turab2019analytic,turab2020corrigendum}, where a model describing the learning process of paradise fish was analyzed. This, in turn, is based on empirical research described in \cite{bush1956two}. In the experiment, the fish were given two gates through which to swim. One of them had a higher probability that a fish would obtain a reward. In this way, it was natural to observe that in subsequent trials the fish preferred the more rewarding gate over the other. If $t$ is the initial probability of choosing the most rewarding gate, it should increase to $\alpha t + 1-\alpha$ in the next trial when the fish chooses it. On the other hand, this probability decreases to $\beta t$ when the fish chooses the least beneficial gate. Here, $0<\alpha\leq\beta<1$ are the learning rates. A specimen adapting the strategy of choosing the most rewarding gate should then gain an advantage over other fish so that after many trials it learns to choose the correct one. If $f(t)$ is the probability of choosing the rewarding gate after many trials (learned behavior), the mathematical model for this has the form of a functional equation
\begin{equation}\label{ec1}
	f(t)=tf(\alpha t+1-\alpha)+(1-t)f(\beta t), \quad f(0) = 0, \quad f(1) = 1, 
\end{equation}
for any $t\in [0,1]$. Here, $f:[0,1]\to \mathbb{R}$ is an unknown function and $0<\alpha\leq \beta <1 $. In Theorems 5.1 and 5.2 of \cite{turab2019analytic}, it is proved that under the assumption that $4\beta <1$, there exists a unique solution to (\ref{ec1}) in the complete metric space $CL^{0,1}=\{f\in H^1[0,1]:\ f(0)=0,\ f(1)=1\}$. For example, when $\alpha = 0$, that is, the fish always chooses the unrewarding gate, by iteration we have
\begin{equation}\label{ec15}
	f(t) = t f(1) + (1-t) f(\beta t) = t+ (1-t) f(\beta t).
\end{equation}
By inspection, one can show that the solution has the form
\begin{equation}
	f(t) = 1-\prod_{n=0}^{\infty}(1-\beta^n t),
\end{equation}
since, by a change of the variable $n+1 \mapsto n$, we have $(1-t)f(\beta t) = 1-t - (1-t)\prod_{n=0}^{\infty}(1-\beta^{n+1} t) =  1-t - \prod_{n=0}^{\infty}(1-\beta^{n} t) = f(t)-t$ which is equivalent to \eqref{ec15}. It is obvious that the occurrence of analytical closed-form solutions is an exceptional phenomenon, and in practice one is forced to use efficient numerical methods. 

Mathematical analysis of \eqref{ec1} or similar functional equations arising in behavioral science has been conducted in several other papers, in addition to those mentioned above. For example, in \cite{lyubich1973functional} Schauder's fixed point theorem was used to prove the existence under the assumption that the solution can be expanded in a certain power series. On the other hand, the Banach contraction theorem was used to show the existence and uniqueness in \cite{istruactescu1976functional}. Further advances have been achieved in \cite{dmitriev1982functional, berinde2015functional}. In our previous work \cite{paradiseFefiKishinLukas} we have considered a naturally generalized version of \eqref{ec1} and proved its unique solvability in the Lipschitz space by carefully choosing a closed subset and finding a suitable contractive operator. We have also observed that Picard's iteration, suggested by many authors as a method of obtaining approximate solutions, is extremely demanding on computational power. This made it difficult for practitioners to use. To remedy that, we provided some accurate and analytical approximations.  

In the present paper, we will study the following functional equation with boundary conditions
\begin{equation}\label{ec2}
	f(t)=\varphi(t)f(\varphi_1(t))+(1-\varphi(t))f(\varphi_2(t)), \quad f(0)=0, \quad f(1)=1,
\end{equation}
for any $t\in [0,1]$, where $f:[0,1]\to \mathbb{R}$ is a unknown function. The coefficients satisfy
\begin{equation}
	\begin{cases}
		\varphi:[0,1]\to \mathbb{R}, & \\
		\varphi_i:[0,1]\to [0,1], & \varphi_1(1)=1, \; \varphi_2(0)=0. \\
	\end{cases}
\end{equation}
Note that this setting generalizes \eqref{ec1} in a natural way in which all coefficients retain the essential properties of those in \eqref{ec1}. In the following, we prove that the solution to the above problem is unique and belongs to the space of H\"older function. As will be clear from the presentation, the choice of the function space is crucial for the relevant operator to become a contraction. H\"older space provides minimal regularity, apart from the lone continuity, to grant a unique solution. Previously, we were able to obtain an analogous result for Lipschitz functions \cite{paradiseFefiKishinLukas}. However, in the present work we show that it is possible to relax the regularity of the function space even more and still have a unique solution. Moreover, in the second part of the paper, we construct an efficient numerical scheme based on the collocation method and prove its convergence along with the order. This approach of finding approximate solutions is far superior to Picard's iteration, which was reported and verified in \cite{paradiseFefiKishinLukasCollo}. As a general rule, the convergence, both in practice and in the proof of it, is easier to obtain when sufficient regularity is available. The favorable case for piecewise linear collocation is having a twice-differentiable solution. For H\"older continuous functions showing that the numerical scheme is convergent requires more refined techniques. In the following, we show how to prove this result by using several estimates of the projection operator. These are auxiliary results, but may be interesting on their own. 

Equation \eqref{ec2} can be understood from several points of view. Note that the functions $\varphi_{1,2}$ mix the argument of the solution in a possibly nonlinear way. Therefore, the value of $f$ in $t$ always depends on the values of the function at $\varphi_1(t)$ and $\varphi_2(t)$. This makes the problem \textit{nonlocal}. Due to this complication, it would be difficult to use numerical methods based on a prescribed mesh. Rather, it is more natural to use schemes that return a function defined over the whole domain $[0,1]$ such as the collocation method \cite{brunner2004collocation}. This mixing of arguments introduces various difficulties that have been investigated by many authors. For example, in the paradise fish equation, the value of $f$ at $t$ to the values at $\alpha t + 1-\alpha$ and $\beta t$. These can be understood as a \textit{proportional} or, in general, a \textit{vanishing} delay (in contrast to the usual delay equations with terms being functions of $t - \tau$ for some $\tau > 0$). For example, the simplest of such problems $f(t) = a f (b t) + g(t)$ has gained considerable attention as a prototype functional equation associated with the celebrated pantograph equation \cite{iserles1993generalized, buhmann1993stability, yang2019mean}. In \cite{liu1995linear} existence and uniqueness of the solution has been proved along with the construction of a collocation scheme. These results were further generalized in \cite{brunner2011analysis, tian2020analysis} in which the convergence of the numerical method was proved. In some sense, the equation studied by us is more complex since it involves two independent ways of mixing arguments in a not necessarily proportional way. Furthermore, we consider a boundary value problem rather than an initial one in which the word "delay" loses its physical meaning. Rather, we would like to think of \eqref{ec2} as a nonlocal equation.

In the next section, we revise some elementary properties of H\"older spaces. Section 3 contains our main result concerning the existence and uniqueness, while in Section 4 we construct and analyze the collocation scheme. In Section 5 we illustrate our finding by some numerical simulations. We close the paper with some remarks on open problems that appeared during our work. 

\section{H\"older space preliminaries}
Before to present our main results, we will give the mathematical tools which we will use in the present paper. This material can be found in \cite{banas2013space}. Let $[a,b]$ be a closed interval in $\mathbb{R}$, $C[a,b]$ denote the space of continuous functions with real values in $[a,b]$ equipped with the classical supremum norm, that is, for $f\in C[a,b]$, $\|f\|_{\infty}=\sup \{|f(t)|:\ t\in [a,b]\}$. Similarly, the space of smooth functions with $k$-th continuous derivatives is denoted by $C^k[a,b]$. Now, we define two specific subsets of the space of continuous functions. For $0<\gamma\leq 1$ fixed, $H^\gamma[a,b]$ will denote the space of real functions $f:[a,b]\to \mathbb{R}$ such that 
\begin{equation}
	\sup \left\{\frac{|f(t)-f(s)|}{|t-s|^\gamma}:\ t,s\in [a,b],\ \ t\neq s\right\}<\infty.
\end{equation}
It is easily seen that $H^\gamma[a,b]$ is a linear subspace of $C[a,b]$ and can be normed by 
\begin{equation}
	\|f\|_{\gamma}=|f(a)|+\sup \left\{\frac{|f(t)-f(s)|}{|t-s|^\gamma}:\ t,s\in [a,b],\ t\neq s\right\},
\end{equation} 
for any $f\in H^\gamma[a,b]$. It is proved that $(H^\gamma[a,b], \|\cdot\|_{\gamma})$ is a Banach space. The space $H^\gamma[a,b]$ is called the \textbf{H\"older space} and in the specific case $\gamma=1$, the \textbf{Lipschitz space}. A closed subspace of the H\"older space will also be of use: functions vanishing at the boundary, that is,
\begin{equation}
	H_0^\gamma[a,b] := \{f \in H^\gamma[a, b]: \; f(a) = f(b) = 0\},
\end{equation}
which is a Banach space with the inherited norm $\|\cdot\|_\gamma$ (note that the term $f(a) = 0$).

For numerical methods, we sometimes also require some more regularity and define the space of functions of which derivatives are H\"older continuous. The following space will be useful in studying the error of our numerical method
\begin{equation}
	H^{k,\gamma}[a,b] := 
	\begin{cases}
		H^\gamma[a,b], & k = 0, \\
		\{f:[a,b]\mapsto \mathbb{R}:\; f' \in H^\gamma[a,b]\}, & k = 1,
	\end{cases}
\end{equation}
where prime denotes the derivative. The space $H_0^{k,\gamma}[a,b]$ is defined analogously. It is also clear how to define H\"older spaces of higher smoothness, however, we will not need them. We can state some simple properties of the H\"older spaces $H^\gamma[a,b]$.
\begin{lemma}[\cite{1n}]\label{lema1}
	For $f\in H^\gamma[a,b]$, the following inequality holds
	\begin{equation}
		\|f\|_{\infty}\leq \max\{1, (b-a)^{\gamma}\} \|f\|_{\gamma}.
	\end{equation}
\end{lemma}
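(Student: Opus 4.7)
The plan is to bound $|f(t)|$ pointwise for $t \in [a,b]$ by splitting it via the boundary value $f(a)$ and then exploiting the H\"older seminorm. Specifically, I would write
\begin{equation*}
|f(t)| \leq |f(t) - f(a)| + |f(a)| \leq \frac{|f(t)-f(a)|}{|t-a|^\gamma}\,|t-a|^\gamma + |f(a)|,
\end{equation*}
and then use $|t-a|^\gamma \leq (b-a)^\gamma$ together with the definition of the H\"older seminorm to get
\begin{equation*}
|f(t)| \leq (b-a)^\gamma \sup_{s\neq r}\frac{|f(s)-f(r)|}{|s-r|^\gamma} + |f(a)|.
\end{equation*}

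From here the only task is to compare the right-hand side with $\|f\|_\gamma = |f(a)| + \sup_{s\neq r}\tfrac{|f(s)-f(r)|}{|s-r|^\gamma}$. I would split into two cases. If $(b-a)^\gamma \leq 1$, then the coefficient of the seminorm is at most $1$, and the coefficient of $|f(a)|$ is exactly $1$, so the bound is at most $\|f\|_\gamma = \max\{1,(b-a)^\gamma\}\|f\|_\gamma$. If instead $(b-a)^\gamma > 1$, I can trivially enlarge the coefficient of $|f(a)|$ from $1$ to $(b-a)^\gamma$ and factor, obtaining $(b-a)^\gamma \|f\|_\gamma = \max\{1,(b-a)^\gamma\}\|f\|_\gamma$.

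Finally, since the estimate holds for every $t \in [a,b]$ with a bound independent of $t$, taking the supremum on the left yields $\|f\|_\infty \leq \max\{1,(b-a)^\gamma\}\|f\|_\gamma$, as claimed. There is no real obstacle here; this is a short direct estimate, and the only minor subtlety is the case split to ensure both $|f(a)|$ and the seminorm are absorbed into the single factor $\max\{1,(b-a)^\gamma\}$.
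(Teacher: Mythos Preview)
Your argument is correct and is the standard direct estimate. Note that the paper does not actually give a proof of this lemma; it simply cites it from \cite{1n}, so there is no in-paper proof to compare against. One cosmetic remark: the identity $|f(t)-f(a)| = \frac{|f(t)-f(a)|}{|t-a|^\gamma}|t-a|^\gamma$ requires $t\neq a$, but the case $t=a$ is trivial since $|f(a)|\leq \|f\|_\gamma$ directly.
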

\begin{lemma}[\cite{1n}]\label{lema2}
	For $0<\gamma<\beta\leq 1$, we have
	\begin{equation}
		H^{\beta}[a,b]\subset H^\gamma[a,b]\subset C[a,b].
	\end{equation}
	Moreover, if $f\in H^{\beta}[a,b]$ then
	\begin{equation}
		\|f\|_{\gamma}\leq \max \{1, (b-a)^{\beta-\gamma}\} \|f\|_{\beta}.
	\end{equation}
\end{lemma}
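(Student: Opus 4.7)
My plan is to establish the inclusion $H^\gamma[a,b]\subset C[a,b]$ first (which is immediate because any $f$ satisfying $|f(t)-f(s)|\leq K|t-s|^\gamma$ is in particular uniformly continuous), and then concentrate on the nontrivial statement: if $f\in H^\beta[a,b]$ with $0<\gamma<\beta\leq 1$, then $f\in H^\gamma[a,b]$ together with the stated norm inequality.

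The key algebraic identity is the factorization
\begin{equation*}
\frac{|f(t)-f(s)|}{|t-s|^\gamma}=\frac{|f(t)-f(s)|}{|t-s|^\beta}\,|t-s|^{\beta-\gamma},
\end{equation*}
valid for $t\neq s$. I bound the first factor by the $\beta$-Hölder seminorm $[f]_\beta\leq \|f\|_\beta$, and I bound the second factor by $(b-a)^{\beta-\gamma}$, using that $x\mapsto x^{\beta-\gamma}$ is increasing since $\beta-\gamma>0$ and that $|t-s|\leq b-a$. Taking the supremum over $t\neq s$ yields the seminorm estimate $[f]_\gamma\leq (b-a)^{\beta-\gamma}\,\|f\|_\beta$, which already shows $f\in H^\gamma[a,b]$ and gives the first inclusion in the chain.

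To obtain the stated norm inequality I split on the length of the interval. If $b-a\geq 1$ then $(b-a)^{\beta-\gamma}\geq 1$, and
\begin{equation*}
\|f\|_\gamma=|f(a)|+[f]_\gamma\leq |f(a)|+(b-a)^{\beta-\gamma}[f]_\beta\leq (b-a)^{\beta-\gamma}\bigl(|f(a)|+[f]_\beta\bigr)=(b-a)^{\beta-\gamma}\|f\|_\beta.
\end{equation*}
If $b-a<1$ then $(b-a)^{\beta-\gamma}<1$, so trivially $[f]_\gamma\leq [f]_\beta$, giving $\|f\|_\gamma\leq |f(a)|+[f]_\beta=\|f\|_\beta$. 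In both cases the bound is dominated by $\max\{1,(b-a)^{\beta-\gamma}\}\,\|f\|_\beta$.

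There is no serious obstacle; the only subtlety is to remember that the Hölder norm includes the evaluation term $|f(a)|$, so one must check that the prefactor $\max\{1,(b-a)^{\beta-\gamma}\}$ is chosen precisely to absorb this term simultaneously with the seminorm. The case split on $b-a$ is what makes the $\max$ appear naturally and cannot be avoided if one wants a single clean inequality.
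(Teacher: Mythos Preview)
Your argument is correct and is exactly the standard proof of this elementary fact. Note that the paper does not actually supply a proof of this lemma; it merely states it with a citation to \cite{1n}, so there is nothing to compare against beyond observing that your approach is the natural one.
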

Finally, it is known that $H^\gamma[a,b]$ is a Banach algebra, and to make the paper self-contained, we give the proof of this fact.
\begin{lemma}\label{lema3}
	For $f,g\in H^\gamma[a,b]$, we have $f\cdot g\in H^\gamma[a,b]$.
\end{lemma}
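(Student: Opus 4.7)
The plan is to use the standard product decomposition
\begin{equation*}
f(t)g(t) - f(s)g(s) = f(t)\bigl(g(t) - g(s)\bigr) + g(s)\bigl(f(t) - f(s)\bigr),
\end{equation*}
and then, after dividing by $|t-s|^\gamma$ and taking the supremum over $t\neq s$ in $[a,b]$, estimate the resulting seminorm of $f\cdot g$ in terms of the seminorms of $f$ and $g$ and their sup-norms.

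More concretely, I would first bound
\begin{equation*}
\sup_{t\neq s}\frac{|f(t)g(t)-f(s)g(s)|}{|t-s|^\gamma} \le \|f\|_\infty\, [g]_\gamma + \|g\|_\infty\, [f]_\gamma,
\end{equation*}
where $[\cdot]_\gamma$ denotes the H\"older seminorm appearing in the definition of $\|\cdot\|_\gamma$. Since both $[f]_\gamma \le \|f\|_\gamma$ and $[g]_\gamma \le \|g\|_\gamma$ trivially, and since Lemma~\ref{lema1} gives $\|f\|_\infty \le C\|f\|_\gamma$ and $\|g\|_\infty \le C\|g\|_\gamma$ with $C=\max\{1,(b-a)^\gamma\}$, this seminorm is finite. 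To finish, I would add the boundary contribution by noting that $|f(a)g(a)| \le |f(a)|\,|g(a)| \le \|f\|_\gamma \|g\|_\gamma$, which together with the previous estimate shows both that $fg \in H^\gamma[a,b]$ and that an inequality of the form $\|fg\|_\gamma \le K\,\|f\|_\gamma\,\|g\|_\gamma$ holds (for some constant $K$ depending only on $b-a$ and $\gamma$), which is the quantitative statement that $H^\gamma[a,b]$ is a Banach algebra up to an equivalent norm.

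No step here looks genuinely hard: the only mild subtlety is making sure that the sup-norm factors $\|f\|_\infty$ and $\|g\|_\infty$ that emerge from the product rule are converted back into $\|\cdot\|_\gamma$ quantities, which is exactly what Lemma~\ref{lema1} is designed for. I would not bother optimizing the resulting constant $K$ since the lemma as stated only asserts membership $fg\in H^\gamma[a,b]$; a clean continuity estimate is a byproduct but not essential for the later contraction arguments, where the relevant products will be handled with more care anyway.
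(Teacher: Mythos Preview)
Your proposal is correct and follows essentially the same route as the paper: both use the standard add-and-subtract decomposition of $f(t)g(t)-f(s)g(s)$ into a term with a factor $f$ times an increment of $g$ plus a term with a factor $g$ times an increment of $f$, then bound the factors by sup-norms and the increments by H\"older seminorms. The only cosmetic difference is which middle term is inserted (the paper factors out $g(t)$ and $f(s)$, you factor out $f(t)$ and $g(s)$), and you additionally invoke Lemma~\ref{lema1} to push through to a full $\|fg\|_\gamma\le K\|f\|_\gamma\|g\|_\gamma$ estimate, which the paper defers to the remark following the lemma.
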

\begin{proof}
	We take $t,s\in [a,b]$ with $t\neq s$, and then since $f,g\in H^\gamma[a,b]$, it follows that
	\begin{equation}
		\begin{split}
			\frac{|f(t)g(t)-f(s)g(s)|}{|t-s|^{\gamma}}&\leq	\frac{|f(t)g(t)-f(s)g(t)|}{|t-s|^{\gamma}}+\frac{|f(s)g(t)-f(s)g(s)|}{|t-s|^{\gamma}}\\
			&\leq |g(t)|\frac{|f(t)-f(s)|}{|t-s|^{\gamma}}+|f(s)|\frac{|g(t)-g(s)|}{|t-s|^{\gamma}}\\
			&\leq \|g\|_{\infty}\sup \left\{\frac{|f(p)-f(q)|}{|p-q|^{\gamma}}:\ p,q\in [a,b], \ p\neq q\right\}\\
			&+\|f\|_{\infty}\sup \left\{\frac{|g(p)-g(q)|}{|p-q|^{\gamma}}:\ p,q\in [a,b], \ p\neq q\right\}<\infty,
		\end{split}
	\end{equation}
	which ends the proof.
\end{proof}

\begin{remark}\label{nota1}
	Taking into account the above estimate of Lemma \ref{lema3}, we infer that 
	\begin{equation}
		\|f\cdot g\|_{\gamma}\leq \|f\|_{\infty}(\|g\|_{\gamma}-|g(0)|)+\|g\|_{\infty}(\|f\|_{\gamma}-|f(0)|).
	\end{equation}
\end{remark}

From now on, we will consider the interval $[a,b]=[0,1]$ and the H\"older space $H^{\gamma}[0,1]$ for $0<\gamma\leq 1$. We proceed to prove some auxiliary results to prepare for the main result. 

\begin{lemma}\label{lema4}
	The identity mapping $I:[0,1]\to [0,1]$, defined by $I(t)=t$, belongs to $H^{\gamma}[0,1]$ for $0<\gamma\leq 1$.
\end{lemma}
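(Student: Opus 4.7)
The plan is to verify the H\"older condition directly from the definition. Since $I(t) = t$ is trivially continuous on $[0,1]$, the only nontrivial task is to bound the H\"older seminorm
\begin{equation*}
	\sup\left\{\frac{|I(t)-I(s)|}{|t-s|^\gamma} : t,s \in [0,1],\ t \neq s\right\}.
\end{equation*}
Using $I(t) - I(s) = t - s$, the ratio simplifies to $|t-s|^{1-\gamma}$. Since $0 < \gamma \leq 1$, the exponent $1-\gamma$ is nonnegative, and because $t,s \in [0,1]$ we have $|t-s| \leq 1$, whence $|t-s|^{1-\gamma} \leq 1$. Thus the seminorm is finite (in fact bounded by $1$), which is exactly what is required to conclude $I \in H^\gamma[0,1]$.

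For completeness, one can observe that $|I(0)| = 0$, so the full H\"older norm evaluates to $\|I\|_\gamma = 0 + 1 = 1$, with the supremum attained at $t=1$, $s=0$. There is no serious obstacle here; this lemma is a preparatory observation whose content is essentially that the identity mapping is the simplest nontrivial element of every H\"older space on $[0,1]$, and it will presumably be used in later computations (for example, when estimating norms of products such as $\varphi(t)\cdot f(\varphi_1(t))$ via the Banach algebra property in Lemma \ref{lema3}).
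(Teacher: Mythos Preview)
Your proof is correct and follows exactly the same approach as the paper: simplify the H\"older quotient to $|t-s|^{1-\gamma}$ and bound it by $1$ using $|t-s|\leq 1$ and $1-\gamma\geq 0$. The additional remarks on the full norm and the supremum being attained are accurate but not needed for the lemma.
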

\begin{proof}
	It is clear that, for any $t,s\in [0,1]$ with $t\neq s$, 
	\begin{equation}
		\frac{|I(t)-I(s)|}{|t-s|^{\gamma}}=\frac{|t-s|}{|t-s|^{\gamma}}=|t-s|^{1-\gamma}\leq 1,
	\end{equation}
	and this proves that $I\in H^{\gamma}[0,1]$.
\end{proof}
The following lemma is related to the composition operator in $H^{\gamma}[0,1]$.
\begin{lemma}\label{lema5}
	Suppose that $f\in H^{\gamma}[0,1]$ with $0<\gamma\leq 1$ and $\varphi:[0,1]\to [0,1]$ such that $\varphi \in H^1[0,1]$. Then the operator defined by 
	\begin{equation}
		C_{\varphi}(f)(t)=f(\varphi(t))\quad \text{for}\ t\in [0,1],
	\end{equation}
	satisfies:
	\begin{itemize}
		\item[(i)] $C_{\varphi}f\in H^{\gamma}[0,1]$,
		\item[(ii)] $\|C_{\varphi}f\|_{\gamma}\leq |f(\varphi(0))|+(\|f\|_{\gamma}-|f(0)|)\cdot (\|\varphi\|_1-|\varphi(0)|)^{\gamma}$,
		\item[(iii)] under the assumption $f(0)=0$, for any $t\in [0,1]$ we have
		\begin{equation}
			|C_{\varphi}f(t)|\leq \|f\|_{\gamma}\|\varphi\|_1^{\gamma}.
		\end{equation}
	\end{itemize}
\end{lemma}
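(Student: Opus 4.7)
The strategy is to work directly with the Hölder difference quotient for $C_\varphi f$ and to read off parts (i) and (ii) simultaneously, then to handle (iii) separately by exploiting the boundary condition on $f$.

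For parts (i) and (ii), I would fix $t,s\in[0,1]$ with $t\neq s$ and estimate
\[
\frac{|f(\varphi(t))-f(\varphi(s))|}{|t-s|^\gamma}.
\]
If $\varphi(t)=\varphi(s)$ the expression vanishes and contributes nothing to the supremum; otherwise I would insert $|\varphi(t)-\varphi(s)|^\gamma$ to split the ratio as the product of the Hölder quotient of $f$ (evaluated at $\varphi(t),\varphi(s)$) with $\bigl(|\varphi(t)-\varphi(s)|/|t-s|\bigr)^\gamma$. The first factor is bounded by the Hölder seminorm $\|f\|_\gamma-|f(0)|$, and the Lipschitz property of $\varphi$ controls the second factor by $(\|\varphi\|_1-|\varphi(0)|)^\gamma$. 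Adding the endpoint term $|C_\varphi f(0)|=|f(\varphi(0))|$ yields the norm estimate in (ii), and in particular the finiteness of the seminorm gives $C_\varphi f\in H^\gamma[0,1]$, proving (i).

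For (iii), the assumption $f(0)=0$ lets me write
\[
|C_\varphi f(t)|=|f(\varphi(t))-f(0)|\leq (\|f\|_\gamma-|f(0)|)\,|\varphi(t)|^\gamma=\|f\|_\gamma\,|\varphi(t)|^\gamma.
\]
Since $\varphi$ maps into $[0,1]$, I can bound $|\varphi(t)|\leq \|\varphi\|_\infty$, and Lemma \ref{lema1} with $[a,b]=[0,1]$ gives $\|\varphi\|_\infty\leq \|\varphi\|_1$; raising to the $\gamma$ power completes (iii).

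The proof is essentially routine once the Hölder quotient is correctly split, and the main subtlety is just the harmless case analysis $\varphi(t)=\varphi(s)$ versus $\varphi(t)\neq\varphi(s)$, together with being careful to use the seminorm $\|\cdot\|_\gamma-|\cdot(0)|$ (rather than the full norm) at every step so that the stated inequality in (ii) comes out with the sharp constants. No contraction or fixed-point machinery is needed here; the lemma is a composition estimate that will later be fed into the analysis of the nonlocal operator associated to \eqref{ec2}.
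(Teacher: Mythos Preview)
Your proposal is correct and follows essentially the same route as the paper: both proofs split the H\"older quotient of $C_\varphi f$ via the intermediate factor $|\varphi(t)-\varphi(s)|^\gamma$, bound the two pieces by the H\"older seminorm of $f$ and the Lipschitz seminorm of $\varphi$, and then handle (iii) by writing $|f(\varphi(t))|=|f(\varphi(t))-f(0)|$ and invoking Lemma~\ref{lema1}. Your explicit case distinction $\varphi(t)=\varphi(s)$ versus $\varphi(t)\neq\varphi(s)$ is a small clarification the paper leaves implicit, but the argument is otherwise identical.
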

\begin{proof}
	For the proof of $(i)$ we take $t,s\in [0,1]$ with $t\neq s$ and estimate the following quotient
	\begin{equation}
		\frac{|(C_{\varphi}f)(t)-(C_{\varphi}f)(s)|}{|t-s|^{\gamma}}.
	\end{equation}
	In fact, taking into account that $f\in H^{\gamma}[0,1]$ and $\varphi\in H^1[0,1]$, we have
	\begin{equation}
		\begin{split}
			\frac{|(C_{\varphi}f)(t)-(C_{\varphi}f)(s)|}{|t-s|^{\gamma}}&=\frac{|f(\varphi(t))-f(\varphi(s))|}{|t-s|^{\gamma}}\\
			&\leq \frac{|f(\varphi(t))-f(\varphi(s))|}{|\varphi(t)-\varphi(s)|^{\gamma}}\frac{|\varphi(t)-\varphi(s)|^{\gamma}}{|t-s|^{\gamma}}\leq \|f\|_\gamma \|\varphi\|_1^\gamma<\infty,
		\end{split}
	\end{equation}
	which proves that $C_{\varphi}f\in H^{\gamma}[0,1]$. Next, from the definition of $\|\cdot\|_{\gamma}$ and from the estimate obtained in (i), it further follows that
	\begin{equation}
		\|C_{\varphi}f\|_{\gamma}=|f(\varphi(0))|+(\|f\|_{\gamma}-|f(0)|)(\|\varphi\|_1 -|\varphi(0)|)^{\gamma}.
	\end{equation}
	which completes the proof of $(ii)$. For the claim $(iii)$, suppose that $f(0)=0$, then
	\begin{equation}
		\begin{split}
			|C_{\varphi}&f(t)|=|f(\varphi(t))|=|f(\varphi(t))-f(0)|\leq \frac{|f(\varphi(t))-f(0)|}{|\varphi(t)|^{\gamma}}|\varphi(t)|^{\gamma}\\
			&\leq \sup \left\{\frac{|f(p)-f(q)|}{|p-q|^{\gamma}}: \ p,q\in [0,1],\ p\neq q\right\}\|\varphi\|_\infty^{\gamma}\leq \|f\|_{\gamma}\|\varphi\|_1^{\gamma},
		\end{split}
	\end{equation}
	where we have used Lemma \ref{lema1}, specifically, $\|\varphi\|_{\infty}\leq \|\varphi\|_1$. The proof is complete. 
\end{proof}

\section{Existence and uniqueness}
In this section we will prove our main result concerning the existence and uniqueness of solutions to \eqref{ec2}. By $D^{0,1}_{\gamma}[0,1]$ we denote the following set
\begin{equation}
	D^{0,1}_{\gamma}[0,1]=\{f\in H^{\gamma}[0,1]:\ f(0)=0,\ f(1)=1\}.
\end{equation}
Note that, by Lemma \ref{lema1}, $\|f\|_{\infty}\leq \|f\|_{\gamma}$. Take $(f_n)\subset D^{0,1}_{\gamma}[0,1]$ and $f_n\stackrel{\|\cdot\|_{\gamma}}{\longrightarrow}f $ with $f\in H^{\gamma}[0,1]$. Then $f_n\stackrel{\|\cdot\|_{\infty}}{\longrightarrow}f$ and, therefore, $(f_n)$ converges pointwise to $f$, in particular $f_n(0)\to f(0)$ and $f_n(1)\to f(1)$. Hence, $f\in D_{\gamma}^{0,1}[0,1]$, which means that
$D_{\gamma}^{0,1}[0,1]$ is a closed subset of $H^{\gamma}[0,1]$. From this reasoning, the pair $(D_{\gamma}^{0,1}[0,1], d)$, where $d$ is the distance given by 
\begin{equation}
	d(f,g)=\|f-g\|_{\gamma}, \quad \text{for}\ f,g\in D_{\gamma}^{0,1}[0,1],
\end{equation}
is a complete metric space. Furthermore, let $T$ be the operator defined on $D_{\gamma}^{0,1}[0,1]$ by 
\begin{equation}\label{ec3}
	(Tf)(t)=\varphi(t)f(\varphi_1(t))+(1-\varphi(t))f(\varphi_2(t)),
\end{equation}
for any $t\in [0,1]$. The following results state some fundamental properties of $T$ acting on $D_\gamma^{0,1}[0,1]$.

\begin{theorem}\label{teo1}
	Let $0<\gamma\leq 1$ be. Suppose that $f\in D_{\gamma}^{0,1}[0,1]$, $\varphi:[0,1]\to \mathbb{R}$ with $\varphi\in D_{\gamma}^{0,1}[0,1]$ and 
	$\varphi_1,\varphi_2:[0,1]\to [0,1]$, $\varphi_1 ,\varphi_2 \in H^1 [0,1]$ with $\varphi_1(1)=1$ and $\varphi_2(0)=0$. Then
	\begin{itemize}
		\item[(i)] $Tf\in D_{\gamma}^{0,1}[0,1]$.
		\item[(ii)] $\|Tf\|_{\gamma}\leq \|\varphi\|_{\gamma}(2\|\varphi_2\|_1^{\gamma}+(\|\varphi_1\|_1-\varphi_1(0))^{\gamma}+\|\varphi_1\|^{\gamma})\|f\|_{\gamma}$.
		\item[(iii)] For $f,g\in D_{\gamma}^{0,1}[0,1]$, we have
		\begin{equation}
			d(Tf, Tg)\leq  2\|\varphi\|_{\gamma}(\|\varphi_2\|_1^{\gamma}+(\|\varphi_1\|_1-\varphi_1(0))^{\gamma})d(f,g).
		\end{equation}
		\item[(iv)] If $ \|\varphi\|_{\gamma}(2\|\varphi_2\|_1^{\gamma}+(\|\varphi_1\|_1-\varphi_1(0))^{\gamma}+\|\varphi_1\|^{\gamma})<1$, then the operator $T$ given by (\ref{ec3}) has a unique fixed point $f^{\star}$ in $D_{\gamma}^{0,1}[0,1]$. Moreover, given $f_0\in D_{\gamma}^{0,1}[0,1]$ the iteration $\{f_n\}$ in $D_{\gamma}^{0,1}[0,1]$ defined by
		\begin{equation}
			f_n(t)=\varphi(t)f_{n-1}(\varphi_1(t))+(1-\varphi(t))f_{n-1}(\varphi_2(t))
		\end{equation}
		for $n\in \mathbb{N}$, converges to the unique solution $f^{\star}$.
	\end{itemize}
\end{theorem}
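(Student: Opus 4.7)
My plan is to handle (i) and (ii) simultaneously via a single decomposition of $Tf(t) - Tf(s)$, bootstrap the same decomposition for (iii) by applying it to $h = f - g$, and deduce (iv) from the Banach contraction principle. For (i) and (ii), I first verify the boundary conditions: direct substitution of $t=0$ and $t=1$, combined with $\varphi(0)=\varphi_2(0)=f(0)=0$ and $\varphi(1)=\varphi_1(1)=f(1)=1$, gives $Tf(0)=0$ and $Tf(1)=1$. Because $Tf(0)=0$, the H\"older norm $\|Tf\|_\gamma$ reduces to the seminorm alone. I then estimate the quotient $|Tf(t)-Tf(s)|/|t-s|^\gamma$ through the standard add-and-subtract trick
\begin{equation*}
\varphi(t)f(\varphi_1(t)) - \varphi(s)f(\varphi_1(s)) = \varphi(t)[f(\varphi_1(t)) - f(\varphi_1(s))] + [\varphi(t)-\varphi(s)]f(\varphi_1(s)),
\end{equation*}
and analogously for the $\varphi_2$ term. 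Each resulting factor is controlled by tools from Section 2: the composition H\"older quotients by Lemma \ref{lema5}(i)--(ii), the pointwise bounds $|f(\varphi_i(s))|$ by Lemma \ref{lema5}(iii) (both using $f(0)=0$), while $|\varphi(t)|$ and $|\varphi(t)-\varphi(s)|/|t-s|^\gamma$ are immediately bounded by $\|\varphi\|_\gamma$ via Lemma \ref{lema1}.

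The key subtle step, which I expect to be the main obstacle, is bounding the prefactor $|1-\varphi(t)|$ attached to the $\varphi_2$-composition term: a naive triangle inequality yields $1+\|\varphi\|_\gamma$, which is too loose for the clean form claimed in (ii). Instead, I exploit $\varphi(1)=1$ to write $|1-\varphi(t)| = |\varphi(1)-\varphi(t)| \leq (\|\varphi\|_\gamma - |\varphi(0)|)|1-t|^\gamma \leq \|\varphi\|_\gamma$. With this refinement the four contributions sum to exactly the bound asserted in (ii), establishing both (i) and (ii). For (iii), I repeat the same decomposition applied to $h := f - g$, noting $h(0)=h(1)=0$ and hence $(Tf-Tg)(0)=0$. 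The additional zero $h(1)=0$ upgrades the pointwise bound on $|h(\varphi_1(s))|$: instead of Lemma \ref{lema5}(iii) I use $|h(\varphi_1(s))| = |h(1)-h(\varphi_1(s))| \leq \|h\|_\gamma |1-\varphi_1(s)|^\gamma \leq \|h\|_\gamma (\|\varphi_1\|_1-\varphi_1(0))^\gamma$, invoking $\varphi_1(1)=1$ together with Lipschitz continuity of $\varphi_1$. This is precisely the improvement that collapses the four pieces into the tighter symmetric constant $2\|\varphi\|_\gamma\bigl[(\|\varphi_1\|_1-\varphi_1(0))^\gamma + \|\varphi_2\|_1^\gamma\bigr]$.

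For (iv), the plan is to invoke the Banach contraction principle on the complete metric space $(D_\gamma^{0,1}[0,1], d)$. The hypothesis $\|\varphi\|_\gamma(2\|\varphi_2\|_1^\gamma + (\|\varphi_1\|_1-\varphi_1(0))^\gamma + \|\varphi_1\|_1^\gamma) < 1$ implies the (iii) contraction constant is strictly less than $1$, because $\varphi_1(0) \geq 0$ gives $\|\varphi_1\|_1^\gamma \geq (\|\varphi_1\|_1-\varphi_1(0))^\gamma$, so the hypothesis dominates twice the (iii) constant. Existence and uniqueness of the fixed point $f^\star \in D_\gamma^{0,1}[0,1]$ and convergence of the Picard iterates from any starting point then follow immediately from the standard theorem.
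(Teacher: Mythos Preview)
Your proposal is correct and follows essentially the same route as the paper: the identical four-term decomposition of $Tf(t)-Tf(s)$, the same use of $\varphi(1)=1$ to bound $|1-\varphi(t)|\leq\|\varphi\|_\gamma$, the same refinement in (iii) via $h(1)=0$ and $\varphi_1(1)=1$ to get $|h(\varphi_1(s))|\leq\|h\|_\gamma(\|\varphi_1\|_1-\varphi_1(0))^\gamma$, and the Banach contraction principle for (iv). Your explicit observation that the (iv) hypothesis dominates the (iii) contraction constant (since $\|\varphi_1\|_1^\gamma\geq(\|\varphi_1\|_1-\varphi_1(0))^\gamma$) is in fact slightly more transparent than the paper, which leaves that comparison implicit.
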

\begin{proof}
	We start with proving $(i)$. Notice that the operator $T$ can be expressed by
	\begin{equation}
		Tf= \varphi\cdot C_{\varphi_1}(f)+(1-\varphi)\cdot C_{\varphi_2}(f),
	\end{equation}
	and taking into account Lemmas \ref{lema3}, \ref{lema4} and \ref{lema5}, it follows that for $f\in D_{\gamma}^{0,1}[0,1]\subset H^{\gamma}[0,1]$ we have $Tf\in H^{\gamma}[0,1]$. Moreover, we see that 
	\begin{equation}
		(Tf)(0)=\varphi(0)f(\varphi_1(0))+(1-\varphi(0))f(\varphi_2(0))=f(\varphi_2(0))=f(0)=0
	\end{equation}
	where we have used the fact that $\varphi, f\in D_{\gamma}^{0,1}[0,1]$ and our assumption $\varphi_2(0)=0$. On the other hand, since $f,\varphi\in  D_{\gamma}^{0,1}[0,1]$ and $\varphi_1(1)=1$, we deduce
	\begin{equation}
		(Tf)(1)=\varphi(1)f(\varphi_1(1))+(1-\varphi(1))f(\varphi_2(1))=f(\varphi_1(1))=f(1)=1.
	\end{equation}
	This proves that $Tf\in D_{\gamma}^{0,1}[0,1]$.
	
	For the proof of $(ii)$ we have to estimate $\|Tf\|_{\gamma}$ for $f\in D_{\gamma}^{0,1}[0,1]$, we take $t,s\in [0,1]$ with $t\neq s$ and we have
	\begin{equation}
		\begin{split}
			\frac{|(Tf)(t)-(Tf)(s)|}{|t-s|^{\gamma}}
			&=\frac{1}{|t-s|^{\gamma}}\left[|\varphi(t)f(\varphi_1(t))+(1-\varphi(t))f(\varphi_2(t))-\varphi(s)f(\varphi_1(s))-(1-\varphi(s))f(\varphi_2(s))|\right] \\
			&\leq \frac{1}{|t-s|^{\gamma}}\left[|\varphi(t)f(\varphi_1(t))-\varphi(t)f(\varphi_1(s))|\right.+|(1-\varphi(t))f(\varphi_2(t))-(1-\varphi(t))f(\varphi_2(s))|\\
			&+|\varphi(t)f(\varphi_1(s))-\varphi(s)f(\varphi_1(s))|+\left. |(1-\varphi(t))f(\varphi_2(s))-(1-\varphi(s))f(\varphi_2(s))|\right].
		\end{split}
	\end{equation}
	Now, we can form the respective quotients
	\begin{equation}
		\begin{split}
			\frac{|(Tf)(t)-(Tf)(s)|}{|t-s|^{\gamma}}
			&\leq \frac{1}{|t-s|^{\gamma}}\left[|\varphi(t)|\frac{|f(\varphi_1(t))-f(\varphi_1(s))|}{|\varphi_1(t)-\varphi_1(s)|^{\gamma}}|\varphi_1(t)-\varphi_1(s)|^{\gamma} \right.\\
			&+ |1-\varphi(t)|\frac{|f(\varphi_2(t))-f(\varphi_2(s))|}{|\varphi_2(t)-\varphi_2(s)|^{\gamma}}|\varphi_2(t)-\varphi_2(s)|^{\gamma}\\
			& \left.+|\varphi(t)-\varphi(s)||f(\varphi_1(s))|+|\varphi(t)-\varphi(s)||f(\varphi_2(s))|\right]
		\end{split}
	\end{equation}
	or
	\begin{equation}
		\begin{split}
			\frac{|(Tf)(t)-(Tf)(s)|}{|t-s|^{\gamma}} 
			&\leq \|\varphi\|_{\gamma}\frac{|f(\varphi_1(t))-f(\varphi_1(s))|}{|\varphi_1(t)-\varphi_1(s)|^{\gamma}}\left(\frac{\varphi_1(t)-\varphi_1(s)|}{|t-s|}\right)^{\gamma}\\
			&+ \frac{|\varphi(t)-\varphi(1)|}{|1-t|^{\gamma}}|1-t|^{\gamma}\frac{|f(\varphi_2(t))-f(\varphi_2(s))|}{|\varphi_2(t)-\varphi_2(s)|^{\gamma}}\left(\frac{|\varphi_2(t)-\varphi_2(s)|}{|t-s|}\right)^{\gamma}\\
			& +\frac{|\varphi(t)-\varphi(s)|}{|t-s|^{\gamma}}|f(\varphi_1(s))|+\frac{|\varphi(t)-\varphi(s)|}{|t-s|^{\gamma}}|f(\varphi_2(s))|,
		\end{split}
	\end{equation}
	where we have used the fact that $\varphi\in D_{\gamma}^{0,1}[0,1]$ and $\|\varphi\|_{\infty}\leq \|\varphi\|_{\gamma}$. Now, taking into account that \begin{equation}
		\frac{\varphi(t)-\varphi(1)}{|1-t|^{\gamma}}\leq \|\varphi\|_{\gamma}
	\end{equation}
	along with $\varphi_2(0)=0$ and
	\begin{equation}
		\frac{|f(\varphi_i(t))-f(\varphi_i(s))|}{|\varphi_i(t)-\varphi_i(s)|^{\gamma}}\leq \|f\|_{\gamma}, \quad \frac{|\varphi_i(t)-\varphi_i(s)|}{|t-s|}\leq \|\varphi_i\|_1-\varphi_i(0) \quad \text{for}\ i=1,2,
	\end{equation}
	we arrive at 
	\begin{equation}\label{eqn:ecnueva}
		\begin{split}
			\frac{|(Tf)(t)-(Tf)(s)|}{|t-s|^{\gamma}}&\leq \|\varphi\|_{\gamma}\|f\|_{\gamma}(\|\varphi_1\|_1-\varphi_1(0))^{\gamma}\\
			&+ \|\varphi\|_{\gamma}\|f\|_{\gamma}\|\varphi_2\|_1^{\gamma}+\|\varphi\|_{\gamma}|f(\varphi_1(s))|+\|\varphi\|_{\gamma}|f(\varphi_2)(s)|.
		\end{split}
	\end{equation}
	Now, by Lemma \ref{lema5} $(iii)$, we have
	\begin{equation}
		|f(\varphi_i(s))|\leq \|f\|_{\gamma}\|\varphi_i\|_1^{\gamma}  \quad \text{for}\ i=1,2.
	\end{equation}
	From this and the last estimate, we infer that
	\begin{equation}
		\begin{split}
			\frac{|(Tf)(t)-(Tf)(s)|}{|t-s|^{\gamma}}&\leq \|\varphi\|_{\gamma}\|f\|_{\gamma}(\|\varphi_1\|_1-\varphi_1(0))^{\gamma}+ \|\varphi\|_{\gamma}\|f\|_{\gamma}\|\varphi_2\|_1^{\gamma}+\|\varphi\|_{\gamma}|\|f\|_{\gamma}\|\varphi_1\|_1^{\gamma}\\
			&+\|\varphi\|_{\gamma}\|f\|_{\gamma}\|\varphi_2\|_1^{\gamma} = \|\varphi\|_{\gamma}\left(2\|\varphi_2\|_1^{\gamma} +(\|\varphi_1\|_1-\varphi_1(0))^{\gamma}+\|\varphi_1\|_1^{\gamma}\right)\|f\|_{\gamma}.
		\end{split}
	\end{equation}
	This inequality together with the fact that $(Tf)(0)=0$ give us 
	\begin{equation}
		\|Tf\|_{\gamma}\leq \|\varphi\|_{\gamma}\left(2\|\varphi_2\|_1^{\gamma} +(\|\varphi_1\|_1-\varphi_1(0))^{\gamma}+\|\varphi_1\|_1^{\gamma}\right)\|f\|_{\gamma},
	\end{equation}
	which proves $(ii)$.
	
	Next, we turn to $(iii)$. Taking into account the inequality \eqref{eqn:ecnueva} and since for any $f,g\in D_{\gamma}^{0,1}[0,1]$, $Tf-Tg=T(f-g)$, we deduce that, for fixed $t,s\in [0,1]$, $t\neq s$ 
	\begin{equation}\label{ecnueva2}
		\begin{split}
			\frac{|Tf(t)-Tg(t)-(Tf(s)-Tg(s))|}{|t-s|^{\gamma}}&= \frac{|T(f-g)(t)-T(f-g)(s)|}{|t-s|^{\gamma}} \\
			&\leq \|\varphi\|_{\gamma}\|f-g\|_{\gamma}(\|\varphi_1\|_1-\varphi_1(0))^{\gamma}
			+ \|\varphi\|_{\gamma}\|f-g\|_{\gamma}\|\varphi_2\|_1^{\gamma}\\
			&+\|\varphi\|_{\gamma}|(f-g)(\varphi_1(s))|+\|\varphi\|_{\gamma}|(f-g)(\varphi_2)(s)|.
		\end{split}
	\end{equation}
	Now, we will obtain an estimate of $|(f-g)(\varphi_1(s))|$. Since $(f-g)(1)=0$ and $\varphi_1(1)=1$, we deduce that 
	\begin{equation}
		\begin{split}
			|(f-g)(\varphi_1(s))|&= |(f-g)(\varphi_1(s))-(f-g)(1)|=\frac{|(f-g)(\varphi_1(s))-(f-g)(\varphi_1(1))|}{|\varphi_1(s)-\varphi_1(1)|^{\gamma}}|\varphi_1(s)-\varphi_1(1)|^{\gamma}\\
			&\leq \|f-g\|_{\gamma}\left(\frac{|\varphi_1(s)-\varphi_1(1)|}{|s-1|}\right)^{\gamma}|1-s|^{\gamma}\leq \|f-g\|_{\gamma}(\|\varphi_1\|_1-\varphi_1(0))^{\gamma}.
		\end{split}
	\end{equation}
	Taking into account this result and $(iii)$ from Lemma \ref{lema5}, from inequality (\ref{ecnueva2}) it follows that 
	\begin{equation}\label{ecnueva3}
		\begin{split}
			&\frac{|Tf(t)-Tg(t)-(Tf(s)-Tg(s))|}{|t-s|^{\gamma}}\leq \|\varphi\|_{\gamma}\|f-g\|_{\gamma}(\|\varphi_1\|_1-\varphi_1(0))^{\gamma}
			+ \|\varphi\|_{\gamma}\|f-g\|_{\gamma}\|\varphi_2\|_1^{\gamma}\nonumber\\&+\|\varphi\|_{\gamma}\|f-g\|_{\gamma}(\|\varphi_1\|_1-\varphi_1(0))^{\gamma}+\|\varphi\|_{\gamma}\|\varphi_2\|_1^{\gamma}\|f-g\|_{\gamma}=2\|\varphi\|_{\gamma}(\|\varphi_2\|_1^{\gamma}+(\|\varphi_1\|_1-\varphi_1(0))^{\gamma})\|f-g\|_{\gamma}.
		\end{split}
	\end{equation}
	Finally, since $(Tf-Tg)(0)=0$, we deduce that 
	\begin{equation}
		\begin{split}
			d(Tf,Tg)&=\|Tf-Tg\|_{\gamma}\leq 2\|\varphi\|_{\gamma}(\|\varphi_2\|_1^{\gamma}+(\|\varphi_1\|_1-\varphi_1(0))^{\gamma})\|f-g\|_{\gamma}\\
			&= 2\|\varphi\|_{\gamma}(\|\varphi_2\|_1^{\gamma}+(\|\varphi_1\|_1-\varphi_1(0))^{\gamma})d(f,g).
		\end{split}
	\end{equation}
	This completes the proof of $(iii)$. 
	
	
	Lastly, to determine $(iv)$ we use the Banach contraction principle along with the assumption that
	\begin{equation}
		\|\varphi\|_{\gamma}\left(2\|\varphi_2\|_1^{\gamma} +(\|\varphi_1\|_1-\varphi_1(0))^{\gamma}+\|\varphi_1\|_1^{\gamma}\right)<1,
	\end{equation}
	which, by $(iii)$ grants that $T$ is a contraction having a unique fixed point. 
\end{proof}
As we comment on in Section 1, the functional equation 
\begin{equation}\label{ec4}
	f(t)=tf(\alpha t+1-\alpha)+(1-t)f(\beta t)
\end{equation}
for any $t\in [0,1]$ is treated in \cite{turab2019analytic} under conditions $f(0)=0$, $f(1)=1$ and $0\leq \alpha<\beta<1$. This functional equation is a particular case of (\ref{ec2}), where $\varphi(t)=t$, $\varphi_1(t)=\alpha t +1-\alpha$, $\varphi_2(t)=\beta t$ for any $t\in [0,1]$. It can be easily seen that for $0<\gamma\leq 1$, $\varphi\in D_{\gamma}^{0,1}[0,1]$ and $\|\varphi\|_{\gamma}=1$ we have $\varphi_i:[0,1]\to [0,1],\ i=1,2$, $\varphi_1(1)=1$, $\varphi_2(0)=0$, $\varphi_1,\varphi_2 \in H^1[0,1]$, $\|\varphi_1\|_1=1$, $\varphi_1(0)=1-\alpha$, and $\|\varphi_2\|_1=\beta$. We also have the following
\begin{equation}
	2\|\varphi\|_{\gamma}(\|\varphi_2\|_1^{\gamma}+(\|\varphi_1\|_1-\varphi_1(0))^{\gamma}) = 2 \left(\beta^\gamma +(1-1+\alpha)^\gamma\right) = 2 (\alpha^\gamma + \beta^\gamma),
\end{equation} 
and hence, by Theorem \ref{teo1} $(iii)-(iv)$ the following result follows.

\begin{corollary}
	Suppose that $0<\alpha\leq\beta\leq 1$.
	\begin{itemize}
		\item[(a)] If $\alpha^\gamma+\beta^{\gamma}<1/2$ then the functional equation (\ref{ec4}) has a unique solution $f^{\star}$ in $D_{\gamma}^{0,1}[0,1]$. Furthermore, given $f_0\in D_{\gamma}^{0,1}[0,1]$ the iteration $\{f_n\}\in D_{\gamma}^{0,1}[0,1]$ given by
		\begin{equation}
			f_n(t)=tf_{n-1}(\alpha t+1-\alpha)+(1-t)f_{n-1}(\beta t)
		\end{equation}
		for any $n\in \mathbb{N}$ converges to the unique solution $f^{\star}$.
		\item[(b)] If $0< \beta <4^{-\frac{1}{\gamma}}$ then (\ref{ec4}) satisfies the same conclusion as in (a).
	\end{itemize}
\end{corollary}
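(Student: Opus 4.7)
My plan is to apply Theorem \ref{teo1} to the concrete choices $\varphi(t)=t$, $\varphi_1(t)=\alpha t+1-\alpha$, $\varphi_2(t)=\beta t$, which identify (\ref{ec4}) as the special case of (\ref{ec2}) already singled out in the paragraph preceding the statement. The corollary then becomes a matter of checking the hypotheses and inserting the correct Hölder/Lipschitz norms into the contraction estimate.

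First I would verify the structural hypotheses. A direct computation gives $\varphi\in D_\gamma^{0,1}[0,1]$ with $\|\varphi\|_\gamma=1$, because $\varphi(0)=0$ and the Hölder quotient reduces to $|t-s|^{1-\gamma}\le 1$ on $[0,1]$. For the inner maps one checks that $\varphi_1,\varphi_2:[0,1]\to[0,1]$ satisfy $\varphi_1(1)=1$ and $\varphi_2(0)=0$, while the Lipschitz norms are $\|\varphi_1\|_1=|1-\alpha|+\alpha=1$ and $\|\varphi_2\|_1=\beta$, with the side value $\varphi_1(0)=1-\alpha$. In particular $\|\varphi_1\|_1-\varphi_1(0)=\alpha$, which is the key identification driving the final constant.

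Second I would plug these quantities into the contraction estimate of Theorem \ref{teo1}(iii), obtaining
\[
d(Tf,Tg)\le 2\bigl(\beta^\gamma+\alpha^\gamma\bigr)\,d(f,g),
\]
so $T$ is a contraction on the complete metric space $(D_\gamma^{0,1}[0,1],d)$ whenever $\alpha^\gamma+\beta^\gamma<1/2$. Parts (i) and (iv) of Theorem \ref{teo1} then yield via the Banach contraction principle a unique fixed point $f^\star\in D_\gamma^{0,1}[0,1]$ and the convergence of the Picard iterates $f_n$ to $f^\star$ for any initial $f_0\in D_\gamma^{0,1}[0,1]$; this is exactly assertion (a).

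Assertion (b) I would deduce from (a) by a one-line inequality: since by hypothesis $\alpha\le\beta$ we have $\alpha^\gamma\le\beta^\gamma$, hence $\alpha^\gamma+\beta^\gamma\le 2\beta^\gamma$, and the condition $\beta<4^{-1/\gamma}$ is equivalent to $\beta^\gamma<1/4$, giving $\alpha^\gamma+\beta^\gamma<1/2$ and reducing to (a). There is no real analytical obstacle in this proof; the only point requiring care is the correct computation of the Lipschitz norms and, in particular, recognising that the combination $\|\varphi_1\|_1-\varphi_1(0)$ collapses precisely to $\alpha$, so that the symmetric expression $2(\alpha^\gamma+\beta^\gamma)$ emerges naturally.
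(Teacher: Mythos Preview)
Your proposal is correct and follows exactly the paper's approach: specialize Theorem~\ref{teo1} to $\varphi(t)=t$, $\varphi_1(t)=\alpha t+1-\alpha$, $\varphi_2(t)=\beta t$, compute the contraction constant $2(\alpha^\gamma+\beta^\gamma)$ from part~(iii), and invoke the Banach contraction principle; the reduction of (b) to (a) via $\alpha^\gamma+\beta^\gamma\le 2\beta^\gamma<1/2$ is the natural step the paper leaves implicit.
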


We conclude this section with the remark that the above results can be easily generalized to nonhomogeneous equations with homogeneous boundary conditions, which are more feasible to analyze numerically. Hence, let us put $g(t) = f(t) - t$. Then, $g(0) = g (1) = 0$ and $Tg = Tf - Tt = f - Tt$, hence
\begin{equation}
	g(t) = Tg(t) + Tt - t.
\end{equation}  
Notice that since $f \in D^{0,1}_\gamma[0,1]$ we now have $g\in H_0^\gamma[0,1]$. The existence and uniqueness of $f$ is then equivalent to the existence and uniqueness of $g$. We can generalize this idea and consider the following problem
\begin{equation}\label{eqn:NonhomogeneousEq}
	f(t) = Tf(t) + k(t), \quad t\in[0,1], \quad f(0) = f(1) = 0, \quad k \in H_0^\gamma[0,1]. 
\end{equation}
It is easy to see that the above has a unique solution in $H_0^\gamma[0,1]$. 
\begin{corollary}\label{cor:Nohomogeneous}
	Under the assumptions of Theorem \ref{teo1} the problem \eqref{eqn:NonhomogeneousEq} has a unique solution in $H_0^\gamma[0,1]$. 
\end{corollary}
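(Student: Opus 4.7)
The plan is to apply the Banach contraction principle to the affine operator $Sf := Tf + k$ on the complete metric space $(H_0^\gamma[0,1], \|\cdot\|_\gamma)$. Recall that $H_0^\gamma[0,1]$ is already known to be a Banach space (inheriting completeness from $H^\gamma[0,1]$), so the only things to check are that $S$ maps $H_0^\gamma[0,1]$ into itself and that it is a contraction with the same constant as in Theorem \ref{teo1} (iii).

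First, I would verify the mapping property. For $f\in H_0^\gamma[0,1]$, the same computation as in Theorem \ref{teo1} (i), based on Lemmas \ref{lema3}, \ref{lema4} and \ref{lema5}, shows $Tf\in H^\gamma[0,1]$. The boundary conditions are even simpler to check here: since $\varphi\in D_\gamma^{0,1}[0,1]$ gives $\varphi(0)=0$, $\varphi(1)=1$, and since $\varphi_1(1)=1$, $\varphi_2(0)=0$, one gets
\begin{equation}
(Tf)(0)=\varphi(0)f(\varphi_1(0))+(1-\varphi(0))f(\varphi_2(0))=f(0)=0,
\end{equation}
and analogously $(Tf)(1)=f(1)=0$. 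Adding $k\in H_0^\gamma[0,1]$ preserves both the H\"older regularity and the vanishing at the endpoints, so $Sf\in H_0^\gamma[0,1]$.

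Next, I would establish the contraction estimate. Since $T$ is linear in its argument, $Sf-Sg=T(f-g)$ for any $f,g\in H_0^\gamma[0,1]$, and $h:=f-g$ again satisfies $h(0)=h(1)=0$. The crucial observation is that the proof of Theorem \ref{teo1} (iii) only used the boundary conditions $(f-g)(0)=(f-g)(1)=0$ (in order to bound $|h(\varphi_1(s))|$ via $\varphi_1(1)=1$ and $|h(\varphi_2(s))|$ via Lemma \ref{lema5} (iii)), and these conditions are automatic on $H_0^\gamma[0,1]$. Thus the same chain of inequalities yields
\begin{equation}
\|Sf-Sg\|_\gamma=\|T(f-g)\|_\gamma\leq 2\|\varphi\|_\gamma\bigl(\|\varphi_2\|_1^\gamma+(\|\varphi_1\|_1-\varphi_1(0))^\gamma\bigr)\|f-g\|_\gamma,
\end{equation}
and the contraction constant is dominated by the one appearing in the hypothesis of Theorem \ref{teo1} (iv), so it is strictly less than $1$.

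With these two ingredients in place, the Banach contraction principle applied to $S$ on the complete metric space $H_0^\gamma[0,1]$ yields a unique fixed point, which is precisely the unique solution of \eqref{eqn:NonhomogeneousEq}. I do not expect any serious obstacle: the only delicate point is the observation that the estimate in Theorem \ref{teo1} (iii) depends on $f,g$ only through their common boundary values, which transfer seamlessly from $D_\gamma^{0,1}[0,1]$ to $H_0^\gamma[0,1]$.
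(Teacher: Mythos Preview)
Your argument is correct, but the paper takes a slightly different (and shorter) route. Rather than applying the Banach contraction principle to the affine map $S=T+k$, the paper observes that $T$ is a bounded linear operator on the Banach space $H_0^\gamma[0,1]$ with operator norm $\|T\|<1$ (by the estimate from Theorem~\ref{teo1}~(iii), which---as you correctly note---only uses the vanishing boundary values of $f-g$); it then invokes the geometric series theorem to conclude that $(I-T)^{-1}$ exists and is bounded, so that $f=(I-T)^{-1}k$ is the unique solution. The two arguments rest on the same contraction estimate and are essentially interchangeable: the paper's version exploits linearity to give a one-line proof and an explicit solution formula, while your version yields, as a byproduct, convergence of the Picard iterates $f_{n+1}=Tf_n+k$ and works verbatim in a metric-space setting without appealing to the linear structure.
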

\begin{proof}
	Since $H_0^\gamma[0,1]$ is a Banach space, the operator $T$ has a well-defined norm which, due to the assumption, satisfies $\|T\| < 1$. Therefore, by the geometric series theorem, the operator $(I-T)^{-1}: H_0^\gamma[0,1] \mapsto H_0^\gamma[0,1]$ exists and has a trivial kernel (see \cite{zeidler2012applied}, Chapter 1.23). The unique solution is then given by $f = (I-T)^{-1} k$. 
\end{proof}

\section{Collocation method}
As was shown in our previous works \cite{paradiseFefiKishinLukasCollo,paradiseFefiKishinLukas} the iteration of Theorem \ref{teo1} is not necessarily the method of choice for the practical solution of functional equations. This is due to exponential complexity in both memory and CPU time. In \cite{paradiseFefiKishinLukasCollo} we have overcome this difficulty by devising a second-order collocation method, and in this section we extend it to the H\"older case. Note that numerical schemes usually behave well when applied to equations with solutions of a certain smoothness. In the low regularity case, some problems in proving convergence may arise. 

For generality, we solve \eqref{eqn:NonhomogeneousEq} instead of \eqref{ec1}. This also has the advantage that in this case we work in the Banach space $H_0^\gamma[0,1]$. The idea behind the collocation method \cite{brunner2004collocation} is based on the piecewise polynomial approximation on subintervals. More specifically, divide $[0,1]$ into $N$ subintervals. That is, $t_i = i h$ with $h = 1/N > 0$ where $i=0,1,...,N$. We construct a \emph{continuous} approximation $f_h$ to the solution of \eqref{ec1} by requiring that on each subinterval it is a \emph{linear polynomial} (a higher-order approximation can also be constructed on a refined grid). The linear function is then required to satisfy the functional equation at each node (it collocates the solution). Therefore, we have the following conditions for the collocation approximation
\begin{equation}\label{eqn:Collocation}
	\begin{cases}
		f_h(0) = 0, \; f_h(1) = 1, & \text{(boundary conditions)}, \\
		f_h(t_{i-1}^+) = f_h(t_i^-), \quad i = 1,2,..., N-1, & \text{(continuity)}, \\
		f_h(t_i) = Tf_h(t_i) + k(t_i), \quad i = 1,2,..., N-1, & \text{(collocation)}.
	\end{cases}
\end{equation}
Since in each interval the linear approximation has the form $a_i t + b_i$ in total we have $2N$ unknowns $\{a_i, b_i\}_{1}^N$ to determine. On the other hand, \eqref{eqn:Collocation} gives us $2$ boundary conditions, $N-1$ continuity points, and $N-1$ collocation equations. In total we have $2 + N - 1 + N - 1 = 2N$ equation for $2N$ unknowns and we can hope that the corresponding system can be uniquely solved. We will not pursue here the task of showing that the system matrix is non-singular. Due to nonlocality and the fact that \eqref{ec1} mixes arguments in a nonlinear way with $\varphi_{1,2}$ functions, this seems to be a difficult and interesting problem. A much simpler case was solved in \cite{brunner2011analysis} where the authors considered the pantograph functional equation with only one proportional delay. Their analysis required very involved techniques and consideration of many cases. We leave the corresponding problem for future work and focus only on the numerical aspects of the scheme.  

To analyze the collocation scheme, we introduce the linear projection operator by
\begin{equation}\label{eqn:Projection}
	P_h u(t) = \frac{1}{h} \left((t-t_{i-1})u(t_{i}) + (t_{i}-t)u(t_{i-1})\right), \quad t\in [t_{i-1}, t_{i}], \quad i=1,2,3,...,N.
\end{equation}
In \cite{paradiseFefiKishinLukasCollo} we have shown that $P_h u$ is Lipschitz continuous with a unit norm. Since a Lipschitz function is $\gamma$-H\"older for $0<\gamma\leq 1$ the operator $P_h: H^\gamma\mapsto H^\gamma$ is well-defined and below we will find an estimate for its norm when acting between these spaces. Many properties of linear interpolation are well known for the $C^2$ functions. For our analysis, we need more refined estimates for functions of much lower regularity. First, we prove the error bound in the supremum norm. 
\begin{proposition}\label{prop:ProjectionErrorSupremum}
	Let $u\in H^{k,\gamma}[0,1]$ with $k=0,1$ and $0<\gamma\leq 1$. Then,
	\begin{equation}
		\|P_h u - u\|_\infty \leq 2^{-\gamma-(2-\gamma)k} h^{k+\gamma} \|u\|_{k,\gamma}.
	\end{equation}
\end{proposition}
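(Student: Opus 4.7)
My plan is to localize on a generic subinterval $[t_{i-1},t_i]$, since by \eqref{eqn:Projection} the operator $P_h$ is defined piecewise. On such an interval, adding and subtracting $u(t)$ against the barycentric weights yields the baseline identity
\begin{equation*}
P_h u(t) - u(t) = \frac{t - t_{i-1}}{h}\bigl(u(t_i) - u(t)\bigr) + \frac{t_i - t}{h}\bigl(u(t_{i-1}) - u(t)\bigr),
\end{equation*}
which writes the error as a convex combination of two differences that vanish at the nodes. The two cases $k=0$ and $k=1$ then branch off from here.

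For $k=0$ I would bound each nodal difference by the H\"older seminorm, $|u(t_i)-u(t)|\le[u]_\gamma(t_i-t)^\gamma$ and similarly at $t_{i-1}$. Writing $s=t-t_{i-1}\in[0,h]$, this reduces the task to estimating
\begin{equation*}
\Phi(s) := \frac{1}{h}\bigl[s(h-s)^\gamma + (h-s)s^\gamma\bigr].
\end{equation*}
Rather than attempting a direct critical-point analysis --- which is awkward because $\Phi'$ carries an $s^{\gamma-1}$ singularity near the endpoints --- I would apply Jensen's inequality to the concave function $x\mapsto x^\gamma$ with weights $s/h$ and $(h-s)/h$ to obtain $\Phi(s)\le\bigl(2s(h-s)/h\bigr)^\gamma$, and then AM--GM on $s(h-s)\le h^2/4$ to conclude $\Phi(s)\le 2^{-\gamma}h^\gamma$. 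Together with $[u]_\gamma\le\|u\|_\gamma$ this yields the announced bound with $k=0$.

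For $k=1$ the assumption $u'\in H^\gamma$ forces $u\in C^1[0,1]$, so the mean value theorem produces $\xi_1\in(t,t_i)$ and $\xi_2\in(t_{i-1},t)$ with $u(t_i)-u(t)=u'(\xi_1)(t_i-t)$ and $u(t)-u(t_{i-1})=u'(\xi_2)(t-t_{i-1})$. Substituted into the baseline identity, the two linear factors combine into the single common prefactor
\begin{equation*}
P_h u(t) - u(t) = \frac{(t-t_{i-1})(t_i - t)}{h}\bigl(u'(\xi_1) - u'(\xi_2)\bigr).
\end{equation*}
AM--GM bounds the prefactor by $h/4$, and H\"older continuity of $u'$ together with $|\xi_1-\xi_2|\le h$ bounds the bracket by $[u']_\gamma h^\gamma$, so $|P_hu(t)-u(t)|\le \tfrac{1}{4}h^{1+\gamma}[u']_\gamma$. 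Since $-\gamma-(2-\gamma)=-2$ and $[u']_\gamma\le\|u\|_{1,\gamma}$, this is exactly the stated bound at $k=1$.

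I do not expect a real obstacle here: the two cases are essentially independent and each reduces to an elementary inequality once the right identity is written down. The only place where one has to be a little deliberate is the $k=0$ maximization, where the direct differentiation route is messy; the Jensen/AM--GM shortcut is what keeps that step short and uniform in $\gamma\in(0,1]$.
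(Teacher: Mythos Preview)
Your proof is correct and follows essentially the same route as the paper: the same baseline identity, the same H\"older bound for $k=0$, and the same mean-value factorization for $k=1$. The only difference is that where the paper asserts ``a simple calculation'' shows the $k=0$ bound $\Phi(s)=h^{-1}\bigl(s(h-s)^\gamma+(h-s)s^\gamma\bigr)$ is maximized at the midpoint, you instead apply Jensen's inequality for $x\mapsto x^\gamma$ followed by AM--GM, which is a cleaner and fully self-contained way to reach the identical constant $2^{-\gamma}$.
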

\begin{proof}
	For any $t\in [t_i, t_{i+1}]$ we have
	\begin{equation}\label{eqn:ProjectionErrorEst}
		|P_h u(t) - u(t)| = \frac{1}{h} \left|(t-t_i)(u(t_{i+1})-u(t)) + (t_{i+1}-t)(u(t_i)-u(t))\right|.
	\end{equation}
	Now, for $k=0$ we can use the H\"older continuity of $u$ to obtain
	\begin{equation}\label{eqn:Projection2}
		|P_h u(t) - u(t)| \leq \frac{\|u\|_\gamma}{h} \left( (t-t_i)(t_{i+1}-t)^\gamma + (t_{i+1}-t) (t-t_i)^\gamma \right). 
	\end{equation}
	It is a simple calculation to show that the right-hand side of the above, as a function of $t$, attains its maximum for $t=(t_{i+1}+t_i)/2 = (i+1/2)h$, hence
	\begin{equation}
		|P_h u(t) - u(t)| \leq \frac{\|u\|_\gamma}{h} \left(2^{-1-\gamma} h^{1+\gamma} + 2^{-1-\gamma} h^{1+\gamma}\right) = 2^{-\gamma}h^\gamma \|u\|_\gamma,
	\end{equation}
	which proves the case when $k=0$. Now, for $k=1$ we can refine the estimate in \eqref{eqn:ProjectionErrorEst}
	\begin{equation}
		|P_h u(t) - u(t)| = \frac{(t_{i+1}-t) (t-t_i)}{h} \left| \frac{u(t_{i+1})-u(t)}{t_{i+1}-t} - \frac{u(t)-u(t_i)}{t-t_i}\right|.
	\end{equation}
	From the mean-value theorem there exist $\xi,\eta\in (t_i, t_{i+1})$ such that
	\begin{equation}
		\begin{split}
			|P_h u(t) - u(t)| 
			&= \frac{(t_{i+1}-t) (t-t_i)}{h} \left| u'(\xi) -u'(\eta)\right|\leq \frac{h^2}{4h} h^\gamma \|u'\|_\gamma \\
			&= 2^{-2} h^{1+\gamma} \|u'\|_\gamma \leq 2^{-2}h^{1+\gamma} \|u\|_{1,\gamma},
		\end{split}
	\end{equation}
	and the proof is complete.
\end{proof}
As we can see, for $H^{1,1}[0,1]$ functions, that is, when the derivative is Lipschitz, we obtain the optimal second-order accuracy of linear interpolation. Classically, this result is proven for $C^2[0,1]$ functions that coincide almost everywhere with $H^{1,1}[0,1]$. In the twice-differentiable case, the error constant is equal to $2^{-3}$, while for our case it is $2^{-2}$. It is also possible to obtain interpolation error bounds in H\"older norms which confirms the usual folklore that the order of interpolation is equal to the degree of regularity minus the strength of the norm. 
\begin{proposition}\label{prop:ProjectionErrorHolder}
	Let $u\in H^{k,\beta}[0,1]$ with $k=0,1$ and $0<\beta\leq 1$. Then, for $0<\gamma\leq\min\{1, k+\beta\}$ we have
	\begin{equation}
		\|P_h u - u\|_\gamma \leq C h^{k+\beta-\gamma} \|u\|_{k,\beta},
	\end{equation}
	where $C>0$ is a constant dependent only on $k$, $\beta$, and $\gamma$. 
\end{proposition}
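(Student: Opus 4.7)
The plan is to set $e := P_h u - u$ and note that $e(0)=0$, so $\|e\|_\gamma$ reduces to the H\"older seminorm $\sup_{t\neq s}|e(t)-e(s)|/|t-s|^\gamma$. To bound this supremum, I will split it dyadically based on the scale of $|t-s|$ relative to the mesh size $h$: the regime $|t-s|\geq h$ is handled by the already-known sup-norm estimate (Proposition \ref{prop:ProjectionErrorSupremum}), while the regime $|t-s|<h$ is handled by a local, subinterval-wise analysis using the regularity of $u$ directly.

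In the first regime, $|t-s|\geq h$, I use $|e(t)-e(s)|\leq 2\|e\|_\infty$, and Proposition \ref{prop:ProjectionErrorSupremum} yields $\|e\|_\infty \leq C_1 h^{k+\beta}\|u\|_{k,\beta}$. Dividing by $|t-s|^\gamma \geq h^\gamma$ produces the target rate $h^{k+\beta-\gamma}\|u\|_{k,\beta}$ with an explicit constant.

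In the second regime, $|t-s|<h$, the two points lie either in a single subinterval $[t_{i-1},t_i]$ or in two adjacent ones. In the same-subinterval subcase with $k=0$, I bound $|e(t)-e(s)|\leq |P_hu(t)-P_hu(s)|+|u(t)-u(s)|$; the first piece is linear with slope $(u(t_i)-u(t_{i-1}))/h$ whose modulus is bounded by $\|u\|_\beta h^{\beta-1}$, giving $\|u\|_\beta h^{\beta-1}|t-s|$, while the second is directly $\leq \|u\|_\beta |t-s|^\beta$. Dividing by $|t-s|^\gamma$ and using $|t-s|\leq h$ to convert $|t-s|^{1-\gamma}\leq h^{1-\gamma}$ and $|t-s|^{\beta-\gamma}\leq h^{\beta-\gamma}$ (the latter requiring $\gamma\leq\beta$) yields the rate $h^{\beta-\gamma}\|u\|_\beta$. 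For $k=1$, the mean value theorem gives $(P_hu)'(t)=u'(\xi)$ for some $\xi\in(t_{i-1},t_i)$, so $|e'(t)|=|u'(\xi)-u'(t)|\leq\|u'\|_\beta h^\beta$; integrating along $[s,t]$ yields $|e(t)-e(s)|\leq\|u'\|_\beta h^\beta|t-s|$, and dividing by $|t-s|^\gamma$ (with $\gamma\leq 1$) gives $h^{1+\beta-\gamma}$. The adjacent-subinterval subcase follows by splitting at the shared node $t_i$ and applying the single-subinterval bound on each piece, then using the elementary inequality $a^\gamma+b^\gamma\leq 2(a+b)^\gamma=2|t-s|^\gamma$ valid for $\gamma\leq 1$.

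The main obstacle is tracking the admissible range of $\gamma$ across the two values of $k$: the $k=0$ analysis naturally imposes $\gamma\leq\beta$, whereas the $k=1$ case permits the full range $\gamma\leq 1$, and both are captured by the stated constraint $\gamma\leq\min\{1,k+\beta\}$. A secondary technical point is the non-differentiability of $P_h u$ at mesh nodes, which forces the splitting in the adjacent-subinterval case and rules out a direct one-step mean-value argument. Once all four estimates (two regimes $\times$ two values of $k$) are assembled and the worst constant is taken, the resulting $C$ depends only on $k$, $\beta$, and $\gamma$ through the constants of Proposition \ref{prop:ProjectionErrorSupremum} and the subadditivity factor $2$ arising in the adjacent-interval reduction.
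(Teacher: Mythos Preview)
Your argument is correct, and it takes a genuinely different route from the paper's. The paper does not split by the scale of $|t-s|$; instead it first derives a closed expression for $|e(t)-e(s)|$ when $t,s$ lie in the same subinterval (namely $h^{-1}|(t-s)(u(t_i)-u(t_{i-1}))-(t_i-t_{i-1})(u(t)-u(s))|$), bounds this directly for $k=0$ and via the mean value theorem for $k=1$, and then---for \emph{arbitrary} $t\in[t_{i-1},t_i]$, $s\in[t_j,t_{j+1}]$---exploits the node-vanishing $e(t_l)=0$ to write $|e(t)-e(s)|\leq|e(t)-e(t_i)|+|e(t_j)-e(s)|$, recombining the two same-subinterval pieces with the reverse H\"older inequality $(t_i-t)^\gamma+(s-t_j)^\gamma\leq 2^{1-\gamma}((t_i-t)+(s-t_j))^\gamma\leq 2^{1-\gamma}|t-s|^\gamma$. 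Thus the paper never invokes Proposition~\ref{prop:ProjectionErrorSupremum} here, while your dyadic far/near split makes the far case a one-line consequence of it. Your approach is more modular and closer to standard H\"older interpolation arguments; the paper's is self-contained and gets a slightly sharper constant ($2^{1-\gamma}$ in place of your $2$) in the cross-interval combination. Both reach the same conclusion with constants depending only on $k,\beta,\gamma$.
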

\begin{proof}
	First, assume that $t,s \in [t_{i-1}, t_i]$. Set $e_h := P_h u - u$ and by \eqref{eqn:ProjectionErrorEst} write
	\begin{equation}
		\begin{split}
			|e_h(t) - e_h(s)| 
			&= \frac{1}{h} \left|\underline{(t-t_{i-1})(u(t_i)-u(t))}+\dashuline{(t_i-t)(u(t_{i-1})-u(t))}\right. \\
			&\left.- \underline{(s-t_{i-1})(u(t_i)-u(s))}-\dashuline{(t_i-s)(u(t_{i-1})-u(s))}\right|.
		\end{split}
	\end{equation}
	We can now combine the underlined terms of like type with each other
	\begin{equation}
		\begin{split}
			(t-t_{i-1})(u(t_i)-u(t)) &- (s-t_{i-1})(u(t_i)-u(s)) = (t-t_{i-1})(u(t_i)-u(t)) - (s-t_{i-1})(u(t_i)-u(t)) \\
			&+ (s-t_{i-1})(u(t_i)-u(t))- (s-t_{i-1})(u(t_i)-u(s)) \\
			&=(t-s)(u(t_i)-u(t)) - (s-t_{i-1}) (u(t) - u(s)),
		\end{split}
	\end{equation}
	and similarly with the dashed term
	\begin{equation}
		(t_i-t)(u(t_{i-1})-u(t)) - (t_i-s)(u(t_{i-1})-u(s)) = (s-t)(u(t_{i-1})-u(t)) - (t_i-s) (u(t) - u(s)).
	\end{equation}
	Therefore, we obtain a simple fundamental expression for the difference in error
	\begin{equation}\label{eqn:ProjectionErrorFundamental}
		|e_h(t) - e_h(s)| = \frac{1}{h} \left|(t-s) (u(t_i) - u(t_{i-1})) -(t_i-t_{i-1}) (u(t)-u(s))\right|.
	\end{equation}
	Now, for $k=0$ we can bound the above terms to obtain
	\begin{equation}
		|e_h(t) - e_h(s)| \leq \frac{\|u\|_\beta}{h}\left(|t-s|h^\beta + h |t-s|^\beta\right).
	\end{equation}
	Therefore, by dividing by $|t-s|^\gamma$ we have
	\begin{equation}\label{eqn:ProjectionErrorHolder0}
		\frac{|e_h(t) - e_h(s)|}{|t-s|^\gamma} \leq \frac{\|u\|_\beta}{h} \left(|t-s|^{1-\gamma} h^\beta + |t-s|^{\beta-\gamma} h\right) \leq 2 h^{\beta-\gamma}\|u\|_\beta, \quad t,s \in [t_{i-1}, t_i],
	\end{equation}
	because $|t-s|\leq h$. We have thus found the error estimate in the subinterval.
	
	Assume now that $k=1$ and write \eqref{eqn:ProjectionErrorFundamental} in the following form
	\begin{equation}
		|e_h(t) - e_h(s)| = |t-s| \left|\frac{u(t_i) - u(t_{i-1})}{t_{i}-t_{i-1}} - \frac{u(t)-u(s)}{t-s} \right|.
	\end{equation}
	From the mean-value theorem there exist $\xi,\eta \in (t_{i-1}, t_i)$ such that
	\begin{equation}\label{eqn:ProjectionErrorHolder1}
		\frac{|e_h(t) - e_h(s)|}{|t-s|^\gamma} = |t-s|^{1-\gamma} \left|u'(\xi) - u'(\eta)\right| \leq |t-s|^{1-\gamma} h^\beta \|u'\|_\beta \leq h^{1+\beta-\gamma} \|u\|_{1,\beta}, 
	\end{equation}
	for $t,s\in[t_{i-1}, t_i]$. To prove our assertion, we have to find similar bounds for any $t,s\in[0,1]$. Without loss of generality, suppose that $t \in [t_{i-1}, t_i]$ and $s \in [t_j, t_{j+1}]$ for $1 \leq i \leq j \leq N$. Then, noticing that on the nodes the interpolation error vanishes, that is, $e_h(t_l) = 0$ for $0\leq l\leq N$, by \eqref{eqn:ProjectionErrorHolder0} and \eqref{eqn:ProjectionErrorHolder1} we can write
	\begin{equation}
		|e_h(t) - e_h(s)| \leq |e_h(t) - e_h(t_i)| + |e_h(t_j) - e_h(s)| \leq C_0 h^{k+\beta-\gamma} \|u\|_{k,\beta} \left((t_i-t)^\gamma + (s-t_j)^\gamma\right),
	\end{equation}
	for some $C_0 > 0$. Furthermore, we use the reverse H\"older inequality to obtain
	\begin{equation}
		(t_i-t) + (s-t_j) \geq \left((t_i-t)^\gamma+(s-t_j)^\gamma\right)^\frac{1}{\gamma} \left(1^\gamma+1^\gamma\right)^{1-\frac{1}{\gamma}},
	\end{equation}
	or
	\begin{equation}
		(t_i-t)^\gamma+(s-t_j)^\gamma \leq 2^{1-\gamma} \left((t_i-t) + (s-t_j)\right)^\gamma.
	\end{equation}
	Therefore, there exists a constant $C>0$ such that
	\begin{equation}
		|e_h(t) - e_h(s)| \leq C h^{k+\beta-\gamma} \|u\|_{k,\beta} |s-t_j+t_i - t|^\gamma. 
	\end{equation}
	The proof concludes by observing the fact that $s-t_j+t_i - t \leq s-t$, dividing, and taking the supremum. 
\end{proof}

We can now proceed to showing that $P_h$ is a bounded operator when acting on the H\"older space. The main difficulty in finding the norm is the fact that $P_h u$ is a piecewise linear function and we have to compare values of $P_h u$ at points belonging to possibly distant subintervals. The following result states a bound on the norm, which, as in our numerical computations and the beginning of the proof, is not optimal. However, it suffices to show convergence of the collocation scheme, and we leave the problem of sharpening of the estimate for future work. 
\begin{lemma}\label{lem:ProjectionNorm}
	Let $P_h: H^{\gamma}[0,1] \mapsto H^\gamma[0,1]$ with $0<\gamma<1$. Then
	\begin{equation}\label{eqn:ProjectionNorm}
		\|P_h\| \leq 1+2^{1-\gamma}.
	\end{equation}
\end{lemma}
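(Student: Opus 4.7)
The plan is to estimate $\|P_h u\|_\gamma$ directly from the definition $\|P_h u\|_\gamma = |P_h u(0)| + [P_h u]_\gamma$, where $[\cdot]_\gamma$ denotes the H\"older seminorm. Since $P_h$ interpolates at the nodes, $P_h u(0) = u(0)$, so the term $|P_h u(0)|$ contributes exactly $|u(0)|$. The whole proof therefore reduces to showing that the seminorm satisfies $[P_h u]_\gamma \le (1+2^{1-\gamma})\,[u]_\gamma$, from which $\|P_h u\|_\gamma \le \|u\|_\gamma + 2^{1-\gamma}[u]_\gamma \le (1+2^{1-\gamma})\|u\|_\gamma$.

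First I would treat the easy case where $t,s$ lie in a single subinterval $[t_{i-1},t_i]$. There $P_h u$ is affine with slope $(u(t_i)-u(t_{i-1}))/h$, so
\begin{equation*}
|P_h u(t)-P_h u(s)| \;=\; \frac{|u(t_i)-u(t_{i-1})|}{h}\,|t-s| \;\le\; [u]_\gamma\, h^{\gamma-1}|t-s|.
\end{equation*}
Because $|t-s|\le h$ and $1-\gamma\ge 0$, one has $h^{\gamma-1}|t-s|\le |t-s|^\gamma$, hence $|P_h u(t)-P_h u(s)|\le [u]_\gamma|t-s|^\gamma$ on any single subinterval. This also yields, as a free consequence, the endpoint bound $|P_h u(t)-u(t_i)|\le [u]_\gamma(t_i-t)^\gamma$ for $t\in[t_{i-1},t_i]$, which feeds into the main case below.

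Next I would handle the general case $t\in[t_{i-1},t_i]$, $s\in[t_j,t_{j+1}]$ with $i\le j$. The key trick is to insert the intermediate nodes $t_i$ and $t_j$ (where $P_h u$ and $u$ agree) and use the triangle inequality:
\begin{equation*}
|P_h u(t)-P_h u(s)| \;\le\; |P_h u(t)-u(t_i)| + |u(t_i)-u(t_j)| + |u(t_j)-P_h u(s)|.
\end{equation*}
The first and third terms are bounded by $[u]_\gamma(t_i-t)^\gamma$ and $[u]_\gamma(s-t_j)^\gamma$ respectively via the single-subinterval estimate, while the middle term is directly bounded by $[u]_\gamma|t_i-t_j|^\gamma\le [u]_\gamma|t-s|^\gamma$ because $u\in H^\gamma$ and $[t_i,t_j]\subset[t,s]$.

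The final step, which is where the factor $2^{1-\gamma}$ comes from, is to combine the two boundary contributions. I would apply the elementary concavity inequality $a^\gamma+b^\gamma\le 2^{1-\gamma}(a+b)^\gamma$ for $a,b\ge 0$ and $0<\gamma\le 1$ with $a=t_i-t$, $b=s-t_j$. Since $(t_i-t)+(s-t_j)\le s-t=|t-s|$, this yields $(t_i-t)^\gamma+(s-t_j)^\gamma\le 2^{1-\gamma}|t-s|^\gamma$, and summing with the middle-term estimate gives $|P_h u(t)-P_h u(s)|\le (1+2^{1-\gamma})[u]_\gamma|t-s|^\gamma$. Dividing by $|t-s|^\gamma$ and taking the supremum proves $[P_h u]_\gamma\le (1+2^{1-\gamma})[u]_\gamma$, which combined with $|P_h u(0)|=|u(0)|$ gives \eqref{eqn:ProjectionNorm}. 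The only subtle point, and the one worth being careful about, is the triple decomposition and making sure one exploits that $P_h u$ and $u$ coincide at nodes so the middle piece is a \emph{genuine} H\"older increment of $u$ rather than of $P_h u$; everything else is a routine application of the power-mean inequality.
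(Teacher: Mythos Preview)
Your proof is correct and is arguably cleaner than the paper's. The paper splits into three cases (same subinterval, adjacent subintervals, and subintervals separated by at least one full interval) and, for the separated case, invokes the sup-norm interpolation error estimate $\|P_h u-u\|_\infty\le 2^{-\gamma}h^\gamma\|u\|_\gamma$ from Proposition~\ref{prop:ProjectionErrorSupremum}, decomposing through $u(t)$ and $u(s)$ rather than through the nodes; this step needs $h\le|t-s|$, which is precisely why the adjacent-subinterval case must be handled separately there. Your decomposition through the nodes $t_i,t_j$ (where $P_h u$ and $u$ coincide) treats all cross-subinterval pairs in one stroke and requires no prior result, only the concavity inequality $a^\gamma+b^\gamma\le 2^{1-\gamma}(a+b)^\gamma$. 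The paper's route is natural if one already thinks of $P_h$ as an approximation to the identity; yours is more self-contained and unifies the paper's cases (ii) and (iii).
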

\begin{proof}
	Take $t,s\in [0,1]$. We will consider three cases: $t$ and $s$ are in the same subinterval, in neighboring subintervals, or are separated by at least a subinterval. More specifically, we consider
	\begin{itemize}
		\item[(i)] $t,s\in [t_{i-1}, t_i]$ for some $i = 1,2,...,N$,
		\item[(ii)] $t\in [t_{i-1}, t_i]$ and $s\in [t_i, t_{i+1}]$ for some $i = 1,2,...,N-1$,
		\item[(iii)] $t\in [t_{i-1}, t_i]$ and $s\in [t_j, t_{j+1}]$ for some $i = 1, 2, ..., N-2$ and $j = 3,4, ..., N-1$ with $i < j$,
	\end{itemize}
	where, without any loss of generality, we have assumed that $t<s$. When $t$ and $s$ belong to a subinterval $[t_{i-1}, t_i]$, the function $P_h u$ is a linear segment that is trivially H\"older and according to the definition \eqref{eqn:Projection}
	\begin{equation}
		|P_h u(t) - P_h u(s)| = \frac{|u(t_i)-u(t_{i-1})|}{h} |t-s|,
	\end{equation}
	hence, since $P_h u(0) = u(0)$ and $|t-s|<h$ we have
	\begin{equation}\label{eqn:ProjectionNorm(i)}
		|P_h(0)| + \frac{|P_h u(t) - P_h u(s)|}{|t-s|^\gamma} \leq |u(0)| + \sup_{t\neq s} \frac{|u(t)-u(s)|}{|s-t|^\gamma} = \|u\|_\gamma,
	\end{equation}
	which proves (i).
	
	Now, we consider case (ii), that is, $t$ and $s$ are in neighboring subintervals with $t_{i-1} \leq t \leq t_{i} < s \leq t_{i+1}$. We have
	\begin{equation}
		|P_h u(t) - P_h u(s)| \leq |P_h u(t) - u(t_i)| + |u(t_i) - P_h u(s)|,
	\end{equation}
	and due to the definition of the projection \eqref{eqn:Projection} and $u(t_i) = h^{-1}((t-t_{i-1})+(t_i-t))u(t_i)$ we can write
	\begin{equation}\label{eqn:ProjectionErrorNode}
		|P_h u(t) - u(t_i)| = \frac{t_i-t}{h}|u(t_i)-u(t_{i-1})| \leq \left(\sup_{t\neq s} \frac{|u(t)-u(s)|}{|s-t|^\gamma} \right) h^{\gamma-1} (t_i-t),
	\end{equation}
	and similarly for $|P_h u(s) - u(t_i)|$. Then,
	\begin{equation}
		\begin{split}
			|P_h u(t) - P_h u(s)| 
			&\leq \left(\sup_{t\neq s} \frac{|u(t)-u(s)|}{|s-t|^\gamma} \right) h^{\gamma-1} (t_i-t+s-t_i) \\
			&=  \left(\sup_{t\neq s} \frac{|u(t)-u(s)|}{|s-t|^\gamma} \right) h^{\gamma-1} |t-s| \leq 2^{1-\gamma}\left(\sup_{t\neq s} \frac{|u(t)-u(s)|}{|s-t|^\gamma} \right)|t-s|^\gamma,
		\end{split}
	\end{equation}
	since in our case $|t-s|<2h$. Therefore, after dividing by $|t-s|^\gamma$, adding $P_h u(0) = u(0)$, and taking the supremum, we obtain 
	\begin{equation}\label{eqn:ProjectionNorm(ii)}
		|P_h(0)| + \frac{|P_h u(t) - P_h u(s)|}{|t-s|^\gamma} \leq |u(0)| + 2^{1-\gamma}\sup_{t\neq s} \frac{|u(t)-u(s)|}{|t-s|^\gamma} \leq 2^{1-\gamma} \|u\|_\gamma,
	\end{equation}
	which is the needed estimate for the (ii) case.
	
	
	For the (iii) case, we can again use the approximation property of the interpolation. That is, by Proposition \ref{prop:ProjectionErrorSupremum}
	\begin{equation}
		\begin{split}
			|P_h u(t) - P_h u(s)| 
			&\leq |P_h u(t) - u(t)| + |u(t) - u(s)| + |u(s) - P_h u(s)| \\
			&\leq 2^{1-\gamma} h^\gamma \|u\|_\gamma + \sup_{t\neq s} \frac{|u(t)-u(s)|}{|s-t|^\gamma} |t-s|^\gamma.
		\end{split}
	\end{equation}
	Now, since $h\leq |t-s|$ it follows that
	\begin{equation}\label{eqn:ProjectionNorm(iii)}
		\begin{split}
			|P_h u(0)| + \frac{|P_h u(t) - P_h u(s)|}{|t-s|^\gamma} &\leq 2^{1-\gamma} \|u\|_\gamma + |u(0)| + \sup_{t\neq s} \frac{|u(t)-u(s)|}{|s-t|^\gamma} \\
			&
			= (1+2^{1-\gamma}) \|u\|_\gamma.
		\end{split}
	\end{equation}
	Now, in estimates for each of the three cases (i)-(iii), that is \eqref{eqn:ProjectionNorm(i)}, \eqref{eqn:ProjectionNorm(ii)}, and \eqref{eqn:ProjectionNorm(iii)}, we can take the supremum over all $t\neq s$ to obtain that $\|P_h u\|_\gamma \leq (1+2^{1-\gamma})\|u\|_\gamma$. This implies $\|P_h\| \leq 1+2^{1-\gamma}$ and concludes the proof. 
\end{proof}

Now, we are ready to prove our main result in this section, the uniform convergence of the collocation scheme. From the proof, it can also be inferred that the method converges in the H\"older norm. However, we state our error bound in the supremum norm since it is much more natural and useful in practice. 
\begin{theorem}\label{thm:Convergence}
	Let $f$ be the solution of \eqref{ec1} while $f_h$ its collocation approximation that can be found from \eqref{eqn:Collocation}. Assume that $2\|\varphi\|_{\gamma}(\|\varphi_2\|_1^{\gamma}+(\|\varphi_1\|_1-\varphi_1(0))^{\gamma})<(1+2^{1-\gamma})^{-1}$ for some $0<\gamma<1$. Then, 
	\begin{equation}
		\|f-f_h\|_\infty \leq C h^{\gamma},
	\end{equation}
	for some $C>0$ dependent only on $f$, $\gamma$, and the coefficients of \eqref{ec1}. Moreover, if $f\in H_0^{1,\gamma}$, then
	\begin{equation}
		\|f-f_h\|_\infty \leq C h^{1+\gamma}.
	\end{equation}
\end{theorem}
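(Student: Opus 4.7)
The plan is to rephrase the collocation system as an operator equation, invert the resulting operator by a Neumann series in $H_0^\gamma[0,1]$, and transfer the error bound to the supremum norm using the projection estimates proved in Section~4.

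First, since $f_h$ is continuous and piecewise linear, $f_h = P_h f_h$; combined with the collocation identity $f_h(t_i) = Tf_h(t_i) + k(t_i)$ at each node and the reproducing property of $P_h$, this yields $f_h = P_h T f_h + P_h k$. Subtracting from the exact equation $f = Tf + k$ and using $P_h f = P_h(Tf + k)$ gives the fundamental error equation $(I - P_h T)(f - f_h) = (I - P_h)f$. Combining Theorem~\ref{teo1}(iii) with Lemma~\ref{lem:ProjectionNorm} yields
\begin{equation*}
\|P_h T\|_\gamma \leq (1+2^{1-\gamma})\cdot 2\|\varphi\|_\gamma\bigl(\|\varphi_2\|_1^\gamma+(\|\varphi_1\|_1-\varphi_1(0))^\gamma\bigr) < 1
\end{equation*}
by hypothesis, and the geometric-series argument of Corollary~\ref{cor:Nohomogeneous} (applied to $P_h T$ in place of $T$) then produces a bounded $(I - P_h T)^{-1}$ on $H_0^\gamma[0,1]$, so that $e := f - f_h = (I - P_h T)^{-1}(I - P_h) f$ exists and lies in $H^\gamma_0[0,1]$.

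Next, I would derive the sup-norm bound from the fixed-point form $e = (I - P_h) f + P_h T e$. The first summand is controlled by Proposition~\ref{prop:ProjectionErrorSupremum}: $\|(I - P_h) f\|_\infty \leq 2^{-\gamma} h^\gamma \|f\|_\gamma$ when $f \in H^\gamma_0$, and $\|(I - P_h) f\|_\infty \leq 2^{-2} h^{1+\gamma}\|f\|_{1,\gamma}$ when $f \in H^{1,\gamma}_0$. For the second summand, piecewise linearity of $P_h T e$ gives $\|P_h T e\|_\infty = \max_i |Te(t_i)|$, and the same splitting used in the proof of Theorem~\ref{teo1} (exploiting $e(0)=e(1)=0$ together with $\varphi_1(1)=1$ and $\varphi_2(0)=0$) yields the pointwise estimate
\begin{equation*}
|Te(t_i)| \leq \|\varphi\|_\gamma \bigl((\|\varphi_1\|_1-\varphi_1(0))^\gamma+\|\varphi_2\|_1^\gamma\bigr)\bigl(t_i(1-t_i)\bigr)^\gamma\|e\|_\gamma.
\end{equation*}
Expanding $(I - P_h T)^{-1} = \sum_{n\geq 0}(P_h T)^n$ and combining the decaying leading term $\|(I - P_h) f\|_\infty$ with the geometric decay $\|P_h T\|_\gamma^n$ of the tail then delivers the claimed rates $h^\gamma$ and $h^{1+\gamma}$.

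\textbf{Main obstacle.} The subtlety is that $\|(I - P_h) f\|_\gamma$ does not vanish with $h$ when $f$ lives only in $H^\gamma$ (Proposition~\ref{prop:ProjectionErrorHolder} with $k=0$, $\beta=\gamma$ gives only a uniform bound), so the naive chain $\|e\|_\infty \leq \|e\|_\gamma \leq (1-\|P_hT\|_\gamma)^{-1}\|(I-P_h)f\|_\gamma$ via Lemma~\ref{lema1} would yield only uniform boundedness of the error rather than the sharp rate. The technical heart of the proof therefore lies in separating the decaying sup-norm contribution of $(I - P_h) f$ from the non-decaying H\"older contribution of the Neumann tail, and using the boundary-vanishing factor $(t(1-t))^\gamma$ produced by $e \in H^\gamma_0$ to absorb the latter; the improved $h^{1+\gamma}$ rate in the smoother case follows by feeding Proposition~\ref{prop:ProjectionErrorSupremum} with $k=1$ through the same chain.
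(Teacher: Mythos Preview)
Your first two paragraphs—the error identity $(I-P_hT)e=(I-P_h)f$, invertibility of $I-P_hT$ on $H_0^\gamma$ via the Neumann series, and the fixed-point rewriting $e=(I-P_h)f+P_hTe$—coincide with the paper's argument, and you have correctly isolated the obstacle: for $k=0$ the naive chain through $\|e\|_\gamma$ yields only boundedness.

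The gap is in your proposed cure. Your pointwise bound $|Te(t)|\le K\bigl(t(1-t)\bigr)^\gamma\|e\|_\gamma$ is correct, but the weight $\bigl(t(1-t)\bigr)^\gamma$ is at most $4^{-\gamma}$, a fixed constant independent of $h$; it cannot manufacture the missing factor $h^\gamma$. Concretely, feeding your estimate into the Neumann expansion gives
\[
\|e\|_\infty\;\le\;\|(I-P_h)f\|_\infty\;+\;K\,4^{-\gamma}\sum_{n\ge1}\|P_hT\|_\gamma^{\,n-1}\,\|(I-P_h)f\|_\gamma,
\]
and by Proposition~\ref{prop:ProjectionErrorHolder} the tail is $O(h^k)$, not $O(h^{k+\gamma})$. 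The boundary-vanishing factor controls $Te$ near $t=0,1$, but the maximal error may live at interior nodes where $\bigl(t(1-t)\bigr)^\gamma$ is bounded \emph{below}, so nothing is ``absorbed''.

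The paper does not use this weight at all. After obtaining $\|f-f_h\|_\gamma\le C_1h^k$ from the Neumann series and Proposition~\ref{prop:ProjectionErrorHolder} (as you do implicitly), it localises at the nearest node: for $t\in[t_{i-1},t_i]$ one splits $|e(t)|\le|f(t)-f(t_i)|+|f(t_i)-f_h(t)|$ and bounds each piece by a H\"older-type norm times $h^\gamma$, the extra $h^\gamma$ coming from $|t-t_i|\le h$ rather than from any global boundary weight. Combining $\|f-P_hf\|_\gamma\le Ch^k$ and $\|f-f_h\|_\gamma\le C_1h^k$ with this local factor then yields $h^{k+\gamma}$. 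That node-localisation step is the device your proposal is missing.
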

\begin{proof}
	By the assumption and Corollary \ref{cor:Nohomogeneous} we know that there exists a unique solution $f\in H_0^\gamma[0,1]$. Start by noticing that since $f_h$ is a linear function, we can act with $P_h$ on the collocation equation \eqref{eqn:Collocation} and obtain
	\begin{equation}
		f_h = P_h T f_h + P_h k.
	\end{equation}
	We can also apply $P_h$ to the exact equation $f = Tf + k$ and subtract to arrive at
	\begin{equation}
		f_h - P_h f = P_h T f_h - P_h T f,
	\end{equation}
	or by subtracting $f$ we can write
	\begin{equation}
		f_h - f = P_h T(f_h - f) + P_h f - f
	\end{equation}
	This leads to
	\begin{equation}
		f - f_h = (I-P_h T)^{-1} (f-P_h f),
	\end{equation}
	since by the geometric series theorem, the operator $(I-P_h T)^{-1}$ exists because of our assumption. Moreover, Lemma \ref{lem:ProjectionNorm}, and Theorem \ref{teo1} $(iii)$ give
	\begin{equation}
		\|P_h T\| \leq \|P_h\| \|T\| \leq (1+2^{1-\gamma}) \left(2\|\varphi\|_{\gamma}(\|\varphi_2\|_1^{\gamma}+(\|\varphi_1\|_1-\varphi_1(0))^{\gamma})\right) < 1.
	\end{equation}
	When we apply the $\gamma$-H\"older norm, we can obtain the bound for the scheme error
	\begin{equation}
		\|f-f_h\|_\gamma \leq \|(I-P_h T)^{-1}\| \|f-P_h f\|_\gamma.
	\end{equation}
	Now, for $f\in H^{k,\gamma}[0,1]$ with $k=0,1$ by Proposition \ref{prop:ProjectionErrorHolder} we have 
	\begin{equation}\label{eqn:ConvergenceHolderNorm}
		\|f-f_h\|_\gamma \leq \frac{C}{1-\|P_h T\|} h^{k+\gamma-\gamma} \|u\|_{k,\gamma} \leq C_1 h^k,
	\end{equation}
	for some $h$-independent constant $C_1>0$. For any $t\in [0,1]$ there exists a subinterval such that $t\in [t_{i-1}, t_i]$. Then,
	\begin{equation}
		|f(t)-f_h(t)| \leq |f(t) - f(t_i)| + |f(t_i) - f_h(t)| = |f(t) - P_h f(t_i)| + |f(t_i) - f_h(t)|, 
	\end{equation}
	because the interpolant $P_h f$ is equal to $f$ on the nodes. Then, by the H\"older norm estimate \eqref{eqn:ConvergenceHolderNorm} and again by Proposition \ref{prop:ProjectionErrorHolder} we have
	\begin{equation}
		\begin{split}
			|f(t)-f_h(t)| 
			&\leq \|f - P_h f\|_\gamma h^\gamma + \|f-f_h\|_\gamma h^\gamma \leq C h^{k+\gamma-\gamma} h^\gamma \|f\|_{k,\gamma} + C_1 h^{k + \gamma} \\
			&\leq C h^{\gamma + k + \gamma-\gamma} = C h^{k+\gamma}.
		\end{split}
	\end{equation}
	Taking the supremum over $t\in[0,1]$ gives the conclusion. 
\end{proof}
As can be seen from the theorem above, the error of the numerical scheme is of the same order as the interpolation error. Minimally, the error is of order $\gamma$, but if the solution is more regular, it can increase to the optimal level. A discussion of the regularity of solutions to \eqref{eqn:NonhomogeneousEq} can be found in \cite{paradiseFefiKishinLukasCollo}. 

\section{Numerical example}
In this section, we present several illustrative examples that verify our above theoretical considerations. Since the case of smooth solutions has been analyzed in \cite{paradiseFefiKishinLukasCollo}, here we focus only on the less regular, H\"older continuous functions. 

It is a matter of choosing an appropriate $k$ in \eqref{eqn:NonhomogeneousEq} to produce an exact arbitrary solution. Thanks to that, we can test the collocation scheme in various situations. For example, we can choose the following $\gamma$-H\"older function
\begin{equation}\label{eqn:ExactFunction0}
	f(t) = \left(\frac{1}{2}-\left|t-\frac{1}{2}\right|\right)^\gamma \in H_0^\gamma[0,1], \quad 0<\gamma<1.
\end{equation}
Note that $f$ is not differentiable, has a characteristic cusp at $t=1/2$, and for small $\gamma$ its derivative at $t=0,1$ becomes unbounded (see Fig. \ref{fig:ExactFunction0}). 

\begin{figure}
	\centering
	\includegraphics{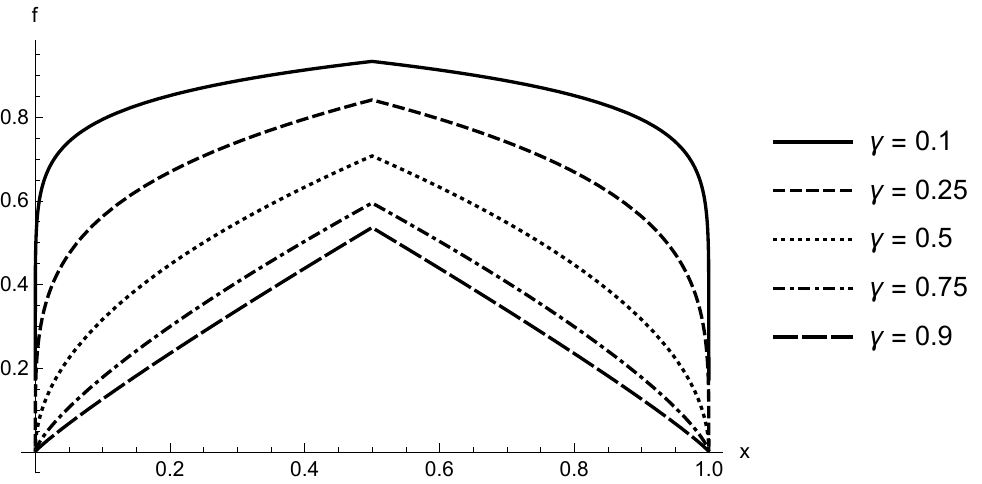}
	\caption{Exemplary exact solution \eqref{eqn:ExactFunction0} for different $\gamma$. }
	\label{fig:ExactFunction0}
\end{figure}

For the coefficients we choose
\begin{equation}
	\varphi(x) = x, \quad \varphi_1(x) = 1-\frac{\alpha}{2}(1-x), \quad \varphi_2(x) = \frac{\alpha}{2}x, \quad \alpha \in \left(0, (2^{2-\gamma}(1+2^{1-\gamma}))^{-\gamma}\right).
\end{equation}
By a straightforward computation we can compute the respective norms of the coefficients to find that $\|\varphi\|_\gamma = 1$, $\|\varphi_1\|_1 = 1$, $\|\varphi_2\|_1 = \alpha/2$. Hence, with our choice of the range for the parameter $\alpha$ we check that the assumption of Theorem \ref{thm:Convergence} is satisfied. We also tested other choices with essentially the same conclusions. The maximum error of the collocation scheme, that is $\|u-u_h\|_\infty$ is depicted in Fig. \ref{fig:Error0} on a log-log scale. As can be seen, the computations verify the claim of Theorem \ref{thm:Convergence} that the order of convergence is strongly related to the smoothness of the solution and is equal to $\gamma$. Note that error lines become almost parallel to the reference even for a small number of subintervals. 

\begin{figure}
	\centering
	\includegraphics{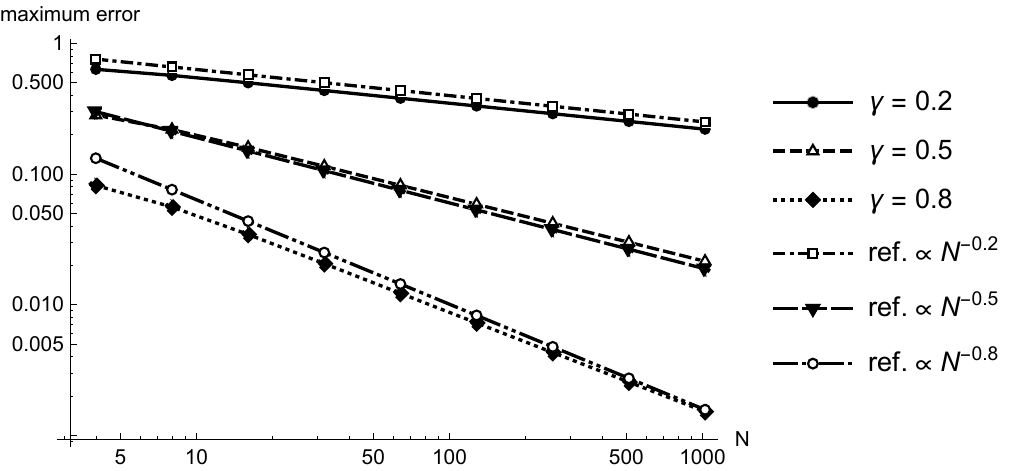}
	\caption{Maximum error of the collocation scheme applied \eqref{eqn:NonhomogeneousEq} with the exact solution \eqref{eqn:ExactFunction0}. }
	\label{fig:Error0}
\end{figure}

\section{Conclusion and look forward}
Functional equations with proportional delay pose an interesting problem for both theoretical and numerical analysis. A careful choice of the function space in which the solution is to be sought allows for the use of the Banach contraction principle to prove the existence and uniqueness. From a numerical point of view, folklore tells us that the more regularity the solution has, the better the numerical approximation. In the H\"older case, however, we are far from the optimal twice-differentiable situation, which requires the development of different techniques to find the interpolation error. From that, the convergence proof follows the usual path.

We have encountered two interesting open problems to investigate in future work. On the one hand, in this paper, the solution of the algebraic system \eqref{eqn:Collocation} is assumed to exist. A proof of this claim would be most welcome for the completeness of the theory. However, as mentioned above, the development of the general $p$-order collocation scheme would produce a very versatile and accurate numerical method. We plan to address both of these problems in future work. 

\section*{Acknowledgements}
\L.P. has been supported by the National Science Centre, Poland (NCN) under the grant Sonata Bis with a number NCN 2020/38/E/ST1/00153. J.C. and K. S. are partially supported by the project PID2023-148028NB-I00.


\end{document}